\newtheorem{introtheorem}{Theorem}
\newtheorem{introcorollary}[introtheorem]{Corollary}
\newtheorem{theorem}{Theorem}[section]
\newtheorem{corollary}{Corollary}
\newtheorem{lemma}[theorem]{Lemma}
\newtheorem{proposition}{Proposition}
\theoremstyle{definition}
\newtheorem{definition}[theorem]{Definition}
\newtheorem{remark}{Remark}
\newtheorem{example}{Example}
\newcommand{\ep}{\varepsilon}
\newcommand{\ind}{\mathrm{ind}}
\newcommand{\Mbar}{\overline{M}}
\newcommand{\RR}{\mathbb R}
\newcommand{\PP}{\mathbb P}
\newcommand{\sphere}{\mathbb S}
\newcommand{\HH}{\mathbb H}
\newcommand{\gbar}{\bar g}
\newcommand{\metric}{\langle\;,\;\rangle}
\newcommand{\etabar}{\bar\eta}
\newcommand{\nablabar}{\overline{\nabla}}
\newcommand{\II}{\mathrm{I\hspace{-1pt}I}}
\newcommand{\Hvec}{\mathbf{H}}
\newcommand{\Hessbar}{\overline{\Hess}}
\newcommand{\Ricbar}{\overline{\Ric}}
\newcommand{\disp}{\displaystyle}
\newcommand{\loc}{\mathrm{loc}}
\newcommand{\vol}{\mathrm{vol}}
\renewcommand{\div}{\diver}
\newcommand{\di}{\mathrm{d}}
\newcommand{\ra}{\rightarrow}
\DeclareMathOperator{\trace}{tr}
\DeclareMathOperator{\Hess}{Hess}
\DeclareMathOperator{\Lip}{Lip}
\DeclareMathOperator{\Ric}{Ric}
\DeclareMathOperator{\diver}{div}
\DeclareMathOperator{\sgn}{sgn}
\DeclareMathOperator{\dist}{dist}
\title{Remarks on mean curvature flow solitons in warped products}
\author{Giulio Colombo}
\address{Dipartimento di Matematica, Universit\`a degli Studi di Milano, 20133 Milano, Italy}
\curraddr{}
\email{giulio.colombo@unimi.it}
\thanks{}
\author{Luciano Mari}
\address{Scuola Normale Superiore, Piazza dei Cavalieri, 7, 56124 Pisa, Italy}
\curraddr{}
\email{luciano.mari@sns.it}
\thanks{}
\author{Marco Rigoli}
\address{Dipartimento di Matematica, Universit\`a degli Studi di Milano, 20133 Milano, Italy}
\curraddr{}
\email{marco.rigoli@unimi.it}
\thanks{}
\begin{document}

\maketitle

\centerline{\emph{Dedicated to Patrizia Pucci on her $65^{\mathrm{th}}$ birthday}}

\begin{abstract}
	We study some properties of mean curvature flow solitons in general Riemannian manifolds and in warped products, with emphasis on constant curvature and Schwarzschild type spaces. We focus on splitting and rigidity results under various geometric conditions, ranging from the stability of the soliton to the fact that the image of its Gauss map be contained in suitable regions of the sphere. We also investigate the case of entire  graphs.
\end{abstract}

\bigskip

\noindent \textbf{MSC 2010} \; {
	Primary: 53C44, 
	53C42, 
	53C40; 
	Secondary: 35B50, 
	35B06. 
}
	
\noindent \textbf{Keywords} \; {
	Mean curvature flow $\cdot$
	warped product $\cdot$
	self-shrinker $\cdot$
	soliton $\cdot$
	splitting theorem
}

\bigskip

\tableofcontents

\section{Introduction}

The mean curvature flow (MCF) is a smooth map $\Psi : [0,T] \times M^m \ra \overline{M}^n$ between Riemannian manifolds $M$, $\overline{M}$ such that $\Psi_t = \Psi(t, \cdot)$ is an immersion for every $t$, and satisfying 
$$
	\frac{\partial \Psi}{\partial t} = m\mathbf{H}_{t},
$$
with $\mathbf{H}_{t}$ the normalized mean curvature of $M_t = \Psi_t(M)$ with the induced metric. We are interested in a MCF that moves along the flow $\Phi$ of a smooth vector field $X \in \mathfrak{X}(\overline{M})$, namely, for which there exists a reparametrization $t \mapsto s(t)$ and a flow $\eta : [0,T] \times M \ra M$ of some tangential vector field on $M$ satisfying
\begin{equation}\label{self_similar}
	\Psi(t,x) = \Phi\big( s(t), \Psi_0(\eta(t,x)) \big).
\end{equation}
Typically, interesting $X$ are conformal or Killing fields, so the induced MCF is self-similar at every time. As expected, this study is motivated by the importance that self-shrinkers, self-translators and self-expanders have in the classical MCF in Euclidean space, and although self-similar solutions of the MCF on general manifolds do not arise from blow-up procedures, nevertheless they provide suitable barriers and thus their study is important to grasp the behaviour of the MCF in an ambient space that possesses some symmetries. This is the original motivation for the present work. \par
Differentiating \eqref{self_similar} in $t$ one deduces that $m\mathbf{H}_t = s'(t) X^\perp$, with $X^\perp$ the component of $X$ orthogonal to $M_t$. Restricting to the time slice $t=0$, this motivates the following definition recently proposed in \cite{AdeLR}.

\begin{definition} \label{intro:def:soliton}
	An isometric immersion $\psi:M^m\to\Mbar^{n+1}$ is a mean curvature flow soliton with respect to $X\in\mathfrak X(\Mbar)$ if there exists $c \in \RR$ (hereafter called the soliton constant) such that
	\begin{equation} \label{intro:soliton}
	cX^{\bot} = m\Hvec
	\end{equation}
	along $\psi$.
\end{definition}

\noindent (Observe that in \cite{AdeLR} $\Hvec$ is not normalized, that is, $\Hvec=\trace\II$). In particular, for $c=0$ the map $\psi$ is a minimal submanifold. In the statements of the results below we shall pay attention to specify the sign of the soliton constant: we feel convenient not to include $c$ in the field $X$ because, later on, we will focus on some ``canonical" fields in $\overline{M}$, and inserting the scale and time-orientation in a separate constant $c$ makes things easier. 

In what follows we typically, but not exclusively, consider codimension $1$ solitons in a warped product 
\begin{equation} \label{intro:wprod}
\Mbar = I\times_h\PP,
\end{equation}
where $I\subseteq\RR$ is an interval, $h:I\to\RR^+=(0,+\infty)$ is a smooth function and $\overline{M}$ is endowed with the warped metric
\begin{equation} \label{intro:wmetric}
\gbar = \di t^2 + h(t)^2\metric_{\PP}
\end{equation}
where $\metric_{\PP}$ is the metric on $\PP$ and $t$ is the standard coordinate on $I$. In this setting there exists a natural closed conformal vector field $X$ on $\Mbar$ coming from a potential and given by
\begin{equation} \label{intro:defX}
X = h(t)\partial_t = \overline{\nabla} \bar \eta,
\end{equation}
where
\begin{equation} \label{B:defetabar}
\etabar(x)=\int_{t_0}^{\pi_I(x)} h(s) \di s,
\end{equation}
with $\pi_I : I \times_h \PP \ra I$ the projection onto the first factor and $t_0 \in I$ a fixed value. If $X = \overline{\nabla} \bar \eta$, for future use we set
\begin{equation}\label{def_eta_intro}
\eta = \bar \eta \circ \psi \ \ : \ \ M \ra \RR.
\end{equation}	
Recall that a vector field $X$ is said to be closed when the dual $1$-form is closed. One essential feature of this setting is that it enables us to derive, for a mean curvature flow soliton with respect to $X$ as in (\ref{intro:defX}), a manageable form of some differential equations (see for instance (\ref{B:etaequation}) below) that will be used in our analytical investigation. The second order operator naturally appearing in those formulas is the drifted Laplacian
\begin{equation}\label{def_driftedLapla}
\Delta_{-cX^{\top}}=\Delta+c\langle\nabla\;\cdot\;,X^{\top}\rangle,
\end{equation}
that for gradient fields $X = \nablabar \etabar$, as the one in \eqref{intro:defX}, becomes the weighted Laplacian 
\begin{equation}\label{def_weightedLapla}
\Delta_{-c\eta}= \Delta+c \langle\nabla\;\cdot\;, \nabla \eta\rangle.
\end{equation}
For more details, and the very general form of the equations mentioned above we refer again to \cite{AdeLR}. Let us also recall, for terminology, that when $\Mbar=I\times\PP$ is simply a product ($h\equiv1$) and $X=\partial_t$ the soliton is called translational. 

\bigskip

\noindent \textbf{Notation and agreements.} Throughout the paper, all manifolds $M$,  $\overline{M}$ will be assumed to be connected; $B_R(o) \subset M$ will denote the geodesic ball of radius $R>0$ centered at a fixed origin $o \in M$ and $\partial B_R(o)$ its boundary. If no confusion may arise we omit to write the origin $o$.

\bigskip

\begin{example}\label{ex_hyper}
	The hyperbolic space $\HH^{m+1}$ of curvature $-1$ admits representations as the warped products $\RR \times_{e^t} \RR^m$ and $\RR \times_{\cosh t} \HH^ m$. In the first case, the slices of the foliation for constant $t$ are horospheres, whereas in the second case they are hyperspheres.
\end{example}

\begin{example}\label{ex_schwarz}
	Given a mass parameter $\mathfrak{m}>0$, the Schwarzschild space is defined to be the product
	\begin{equation}\label{schwarz}
		\overline{M}^{m+1} = (r_0(\mathfrak{m}), \infty)\times \PP^m \qquad \text{with metric} \qquad \metric_{\overline{M}} = \frac{\di r^2}{V(r)} + r^2 \metric_\PP, 
	\end{equation}
	where $\PP$ is a (compact) Einstein manifold with $\Ric_\PP = (m-1)\metric_\PP$, 
	\begin{equation}\label{def_Vr_schwarz}
		V(r) = 1-2 \mathfrak{m} r^{1-m}
	\end{equation}
	and $r_0(\mathfrak{m})$ is the unique positive root of $V(r)=0$. Its importance lies in the fact that the manifold $\RR \times \overline{M}$ with the Lorentzian static metric $-V(r) \di t^2 + \metric_{\overline{M}}$ is a solution of the Einstein field equation in vacuum with zero cosmological constant. In a similar way, given $\bar \kappa \in \{1,0,-1\}$ the ADS-Schwarzschild space (with, respectively, spherical, flat or hyperbolic topology according to whether $\bar \kappa = 1,0,-1$) is the manifold \eqref{schwarz} with respectively
	\begin{equation}\label{def_Vr_ADS}
		V(r) = \bar \kappa + r^2 - 2 \mathfrak{m} r^{1-m}, \qquad \Ric_\PP = (m-1)\bar \kappa \metric_{\PP}, 
	\end{equation}
	and $r_0(\mathfrak{m})$, as before, the unique positive root of $V(r)=0$. They generate static solutions of the vacuum Einstein field equation with negative cosmological constant, normalized to be $-m(m+1)/2$. The products \eqref{schwarz} can be reduced in the form $I \times_h \PP$ with metric \eqref{intro:wmetric} via the change of variables
	\begin{equation}\label{def_tr_sch}
		t = \int_{r_0(\mathfrak{m})}^r \frac{\di \sigma}{\sqrt{V(\sigma)}}, \qquad h(t) = r(t), \qquad I = \RR^+.
	\end{equation}
	Note that $r_0(\mathfrak{m})$ is a simple root of $V(r)=0$, so the integral is well defined, and that $1/\sqrt{V(r)} \not \in L^1(\infty)$ so $I$ is an unbounded interval.
	A similar example for charged black-hole solutions of Einstein equation is the Reissner-Nordstr\"om-Tangherlini space, where \eqref{schwarz} holds with
	$$
		V(r) = 1-2 \mathfrak{m} r^{1-m} + \mathfrak{q}^2 r^{2-2m}, \qquad \PP = \sphere^{m},
	$$
	$\mathfrak{q} \in \RR$ being the charge and $|\mathfrak{q}| \le \mathfrak{m}$.
\end{example}

The paper is naturally divided into three parts. First we study codimension $1$ solitons in the presence of a Killing vector field $B$ on the target space $\Mbar$. The relevant differential equation we analyse is satisfied by $u=\langle B,\nu\rangle$, $\nu$ a unit normal to $\psi:M\to\Mbar$. We will often assume that $\psi$ is \emph{two-sided}, that is, that the normal bundle is trivial, so there exists \emph{globally defined} unit normal $\nu$. If $\overline{M}$ is orientable, this is equivalent to require that $M$ is oriented.\par
By way of example, we apply our results to the case of a self-shrinker of $\RR^{m+1}$, that is, a soliton with respect to the position vector field of $\RR^{m+1}$ with soliton constant $c<0$: the next result was recently shown by Ding, Xin, Yang \cite{DXY}, however our proof in Theorem \ref{B:thm:euclid} below is different, and especially it follows from a strategy that works in greater generality. It serves the purpose to illustrate our technique. For the sake of clarity, we remark that a continuous map $\psi : M \to N$ between differentiable manifolds is said to be proper if $\psi^{-1}(K)$ is a compact subset of $M$ for every compact subset $K$ of $N$. When $M$ and $N$ are complete Riemannian manifolds, this is equivalent to saying that $\psi^{-1}(B)$ is bounded in $M$ for every bounded subset $B$ of $N$.

\begin{introtheorem} \label{intro:thm:euclid}
	Let $\psi:M^m\to\RR^{m+1}$ be an oriented, connected, complete proper self-shrinker whose spherical Gauss map $\nu$ has image contained in a closed hemisphere of $\sphere^m$. Then either $\psi$ is totally geodesic (that is, $\psi(M)$ is an affine hyperplane) or $\psi$ is a cylinder over some connected, complete proper self-shrinker of $\RR^m$.
\end{introtheorem}

As we shall see, the result heavily depends on the parabolicity of the weighted Laplacian $\Delta_{-c\eta}$ on a self-shrinker. However, if $\Delta_{-c\eta}$ is possibly non-parabolic we can still conclude a similar  rigidity for $M$ when the Gauss map is contained in a hemisphere, by using some results in oscillation theory (cf. \cite{BMR}). Suppose that $\psi : M \to I \times_h \PP$ is a complete soliton with respect to $h(t) \partial_t$ and with constant $c$, and define $\bar \eta, \eta$ as in \eqref{B:defetabar}, \eqref{def_eta_intro}. Recall that an end of $M$ (with respect to a compact set $K$) is a connected component of $M \backslash K$ with noncompact closure. Fix an origin $o \in M$ and an end $E$, and define
\begin{equation}\label{def_VE}
	v_E(r) = \int_{\partial B_r \cap E}e^{c\eta},
\end{equation}
where $B_r$ is the geodesic ball of $M$ centered at $o$. Given $a(x)\in C^0(M)$ we introduce the weighted spherical mean
\begin{equation} \label{B:ameansdef}
	A_E(r) = \frac{1}{v_E(r)}\int_{\partial B_r \cap E}a(x)e^{c\eta}
\end{equation}
If $\frac{1}{v_E(r)}\in L^1(\infty)$, the rigidity of $M$ will depend on how much $A_E(r)$ exceeds the critical curve
\begin{equation}\label{def_critical}
	\chi_E(r) = \left\{2v_E(r)\int_r^{+\infty}\frac{\di s}{v_E(s)}\right\}^{-2} \quad \text{for all } r>R.
\end{equation}
For explanations on the origin of $\chi_E$ and its behaviour, as well as for other geometric applications, we refer to Chapters 4, 5, 6 in \cite{BMR}. Here we limit ourselves to observe that, in the minimal case $c=0$ and for the Euclidean space $\RR^m$ (for which $v_E(r) = \omega_{m-1}r^{m-1}$), 
$$
	\chi_E(r) = \frac{(m-2)^2}{4r^2}
$$
is the potential in the famous Hardy inequality on $\RR^m$. In fact, there is a tight relation between $\chi_E(r)$ and Hardy weights, explored in \cite{BMR}.

\par
We are ready to state our theorem. Results in this spirit can be found in \cite{HOS} (for $m=2$), \cite{FR} (for $m \ge 3$ and a compact setting) and \cite[Thm. 5.36]{BMR}.

\begin{introtheorem} \label{B:thm:inv}
	Let $\PP^m$ be a Riemannian manifold of dimension $m\geq 3$, $I \subseteq \RR$ an open interval and let $\psi:M^m\to\Mbar=I\times\PP^m$ be an oriented, connected, complete translating mean curvature flow soliton with respect to the vector field $X=\partial_t$, with soliton constant $c$ and global unit normal $\nu$. Suppose that there exists an end $E$ of $M$ such that $\langle\nu,\partial_t \rangle$ does not change sign on $E$, and that, having fixed an origin $o\in M$ and set $v_E, A_E$ as in \eqref{def_VE}, \eqref{B:ameansdef} with the choice
	\begin{equation}\label{baretaax}
		\bar \eta = \pi_I, \qquad a(x) = \left( \Ricbar(\nu,\nu)+ |\II|^2 \right)(x),
	\end{equation}
	either
	\begin{equation} \label{B:ass1}
		\begin{array}{lll}
			(i) & v_E^{-1} \not \in L^1(\infty), & \; \disp \lim_{r\to+\infty}\int_{E \cap B_r}e^{c(\pi_I \circ \psi)}\left( \Ricbar(\nu,\nu)+ |\II|^2 \right) =+\infty, \quad \text{or} \\[0.5cm]
			(ii) & v_E^{-1} \in L^1(\infty), & \; A_E \ge 0 \ \ \text{on } [2R, \infty) \quad \text{and} \\[0.2cm]
			& & \; \disp \limsup_{r \ra \infty} \int_{2R}^r \Big(\sqrt{A_E(s)}- \sqrt{\chi_E(s)}\Big) \di s = +\infty,
		\end{array}
	\end{equation}
	with $\chi_E$ as in \eqref{def_critical} if $v_E^{-1} \in L^1(\infty)$. Then, $\psi$ is a minimal hypersurface that splits as a Riemannian product $\RR \times \Sigma$ with $\psi_{\Sigma} : \Sigma^{m-1} \to \PP^m$ a minimal hypersurface itself, and the soliton vector $X$ is tangent to $\psi$ along the straight lines $\psi(\RR \times \{y\})$, $y \in \Sigma$. Furthermore, under this isometry $\psi: \RR \times \Sigma \ra \Mbar$ can be written as the cylinder
	\begin{equation}\label{splitting}
		\psi(t,y) = \psi(0,y) + tX.
	\end{equation}
\end{introtheorem}

In the second part we look at solitons as extrema of an appropriate weighted volume functional and we consider the stable case according to Section 3. The stability condition often implies a rigidity for $M$ provided that suitable geometric quantities like $\mathbf{H}$, the second fundamental form $\II$ or the umbilicity tensor 
$$
\Phi = \II - \metric_M \otimes \mathbf{H} 
$$
have controlled $L^p$ norms. This is the case of the next result, Theorem \ref{stab:thm:umbilicity} below, one of the main of the present paper. Its proof depends on Lemma \ref{lem_kato}, that classifies Codazzi tensors whose traceless part satisfies the equality in Kato inequality: this might be of independent interest. We here write the result in a slightly simplified form, and we refer the reader to Theorem \ref{stab:thm:umbilicity} below for a more detailed statement of item 2.

\begin{introtheorem} \label{teo_umbilico_intro}
	Let $\psi:M^m\to\Mbar^{m+1}=I\times_h\PP$ be a connected, complete, stable mean curvature flow soliton with respect to $X=h(t)\partial_t$ with soliton constant $c$. Assume that $\Mbar$ is complete and has constant sectional curvature $\bar\kappa$, with
	$$
		ch'(\pi_I\circ\psi)\leq m\bar\kappa \quad \text{on } \, M.
	$$
	Let $\Phi=\II-\metric_M\otimes\Hvec$ be the umbilicity tensor of $\psi$ and suppose that
	$$
		|\Phi|\in L^2(M,e^{c\eta})
	$$
	with $\eta$ defined as above. Then one of the following cases occurs:
	\begin{enumerate}
		\item $\psi$ is totally geodesic (and if $c \neq 0$ then $\psi(M)$ is invariant by the flow of $X$), or
		\item $M$, $\Mbar$ are flat manifolds, $M$ has linear volume growth and $\psi$ lifts to a grim reaper immersion $\hat \psi : \RR^m \ra \RR^{m+1}$.
	\end{enumerate}
\end{introtheorem}

\begin{remark}
	Particular cases of Theorem \ref{teo_umbilico_intro}, for a translating soliton in Euclidean space $\psi : M^m \ra \RR^{m+1}$, were proved in \cite{IR} under stronger conditions: namely, the authors required either $|\II| \in L^2(M, e^{c\eta})$, or $|\Phi| \in L^2(M, e^{c\eta})$ and the hypothesis that $H$ does not change sign (Thms. A and B therein, respectively). Note that the case of the grim reaper soliton is excluded in their setting since $|\Phi| \not \in L^2(M, e^{c\eta})$. We emphasize that, in case 2, necessarily $\overline{M}$ shall be a nontrivial quotient of $\RR^{m+1}$.
\end{remark}

Applying the above theorem to solitons in the sphere, we deduce the next

\begin{introcorollary} \label{intro:crl:sphere}
	Let $\psi:M^m \ra \sphere^{m+1}$ be a complete, connected mean curvature flow soliton with respect to the conformal field $X=\sin r\,\partial_r$ centered at some point $p\in\sphere^{m+1}$, where $r$ is the distance function from $p$ in $\sphere^{m+1}$. Assume that the soliton constant $c$ satisfies
	$$
		c \leq m
	$$
	and that $|\Phi|\in L^2(M)$. Then $\psi$ is unstable. 
\end{introcorollary}

Suppose that $\psi:M^m\to\Mbar^{m+1}=I\times_h\PP$ is a soliton in a warped product as above, with soliton constant $c$. In \cite{AdeLR} the authors introduced the soliton function 
$$
	\zeta_c(t) = mh'(t) +ch(t)^2,
$$
whose geometric meaning is explained in detail in Proposition 7.1 of \cite{AdeLR}. In what follows it suffices to observe that 
\begin{equation}\label{rem_slicesoliton}
	\begin{array}{c}
		\text{the leaf $\{\bar t\}\times\PP^m$ is a soliton} \\[0.2cm]
		\text{with respect to $h(t)\partial_t$, with constant $c$}
	\end{array} \qquad \Longleftrightarrow \qquad \zeta_c(\bar t)=0.
\end{equation}

For instance realizing $\HH^{m+1}$ as the product $\RR\times_{e^t}\RR^m$ via horospheres, given a soliton $\psi:M^m\to\HH^{m+1}$ with respect to $e^t\partial_t$ with constant $c$, the corresponding soliton function reads
$$
	\zeta_c(t) = me^t+ce^{2t}.
$$
In the last part of the paper, we investigate the case where the soliton $\psi = \psi_u$ is the graph of a function $u : \PP \to \RR$ and we complement recent results in \cite{BMPR}. In particular, we prove the following rigidity theorem for Schwarzschild type spaces in Example \ref{ex_schwarz}.

\begin{introtheorem} \label{intro:sch}
	There exists no entire graph in the Schwarzschild and ADS-Schwarz\-schild space (with spherical, flat or hyperbolic topology) over a complete $\PP$ that is a soliton with respect to the conformal field $r \sqrt{V(r)} \partial_r$ with constant $c \ge 0$, where $V(r)$ is the corresponding potential function defined in either \eqref{def_Vr_schwarz} or \eqref{def_Vr_ADS}.
\end{introtheorem}

We remark that solitons with $c < 0$ in such spaces, in general, do exist, see Example \ref{ex_schwarz_constant} below. Regarding the hyperbolic space, we prove

\begin{introcorollary} \label{intro:crl:hspaceslice}
	Let $\HH^{m+1} = \RR \times_{e^t} \RR^m$ be the hyperbolic space of curvature $-1$ foliated by horospheres and let $X = e^t\partial_t \in \mathfrak X(\HH^{m+1})$. Then
	\begin{enumerate}
		\item there exist no entire graphs $\psi_u : \RR^m \to \HH^{m+1}$ over the horosphere $\RR^m$ which are mean curvature flow solitons with respect to $X$ with $c \ge 0$;
		\item if $\psi_u : \RR^m \to \HH^{m+1}$ is an entire graph over the horosphere $\RR^m$ which is a mean curvature flow soliton with respect to $X$ with soliton constant $c<0$, then 
		\[
		\mathrm{dist}\left( \psi_u(\RR^m),  \left\{t = \log  \left(- \frac m c \right) \right\} \right) = 0.
		\]
	\end{enumerate}
\end{introcorollary}

\begin{remark}
	For $c=0$, that is, for minimal graphs, item 1.~has been proved with an entirely different technique (the moving plane method) by M.P. Do Carmo and H.B. Lawson \cite{docarmolawson}. In their work, they also consider graphs over hyperspheres $\psi_u : M \ra \RR \times_{\cosh t} \HH^m$: this case will be treated in Proposition \ref{prp_hyperspheres} below.
\end{remark}

Part of the techniques used in the proof of the results have been introduced in some of our previous papers. Thus, in order to help the reader and for the sake of clarity, we recall, when needed, the original or appropriately modified statements of the theorems we use in the arguments below.

\section{Invariance by a $1$-parameter group of isometries}

The aim of this section is to study the geometry of codimension $1$ mean curvature flow solitons in the presence of a Killing vector field $B$ on $\Mbar$. We begin with a computational result.

\begin{proposition} \label{B:prp:uequation}
	Let $\psi:M^m\to\Mbar^{m+1}$ be a two-sided, mean curvature flow soliton with respect to $X\in\mathfrak X(\Mbar)$ with soliton constant $c$. Let $\nu$ be a chosen unit normal vector field on $M$ and let $B\in\mathfrak X(\Mbar)$ be a Killing field. Set $u=\langle\nu,B\rangle$; then
	\begin{equation} \label{B:uequation}
		\Delta_{-cX^{\top}}u + |\II|^2u + \Ricbar_{-cX}(\nu,\nu)u = \langle[cX,B],\nu\rangle
	\end{equation}
	where $\Delta_{-cX^{\top}}$ is as in \eqref{def_driftedLapla} and $\Ricbar_{-cX}$ is the modified Bakry-Emery Ricci tensor field on $\Mbar$ given by
	\begin{equation} \label{B:RiccX}
		\Ricbar_{-cX} = \Ricbar + \frac{1}{2}\mathcal L_{-cX}\metric_{\Mbar}
	\end{equation}
	with $\mathcal L_{-cX}$ the Lie derivative along $-cX$.
\end{proposition}

\begin{remark}
	The immersion $\psi : M \to \Mbar$ is said to be two-sided if the induced normal bundle on $M$ is trivial, so that there exists a global unit normal vector field $\nu$ as in the statement of the Proposition. 
\end{remark}

\begin{proof}
	We fix the index ranges $1\leq i,j,\dots\leq m$, $1\leq a,b,\dots\leq m+1$ and we let $\{\theta^a\}$ be an oriented Darboux coframe along $\psi$ with corresponding Levi-Civita connection forms $\{\theta^a_b\}$. If $\{e_a\}$ is the dual frame then
	$$
		B = B^ae_a
	$$
	and the fact that $B$ is Killing is equivalent to the skew symmetry
	\begin{equation} \label{B:killing}
		B^a_b + B^b_a = 0.
	\end{equation}
	Note that, since we are dealing with an oriented Darboux coframe, along $\psi$, $\nu=e_{m+1}$ so that
	$$
		u = B^{m+1}.
	$$
	Differentiating $u$ we obtain
	\begin{equation}
		\di u = \di B^{m+1} = B^{m+1}_j\theta^j - B^t\theta^{m+1}_t = (B^{m+1}_i - B^th_{ti})\theta^i,
	\end{equation}
	that is,
	\begin{equation}
		u_i = B^{m+1}_i - B^th_{ti}.
	\end{equation}
	In particular
	\begin{equation} \label{B:gradu}
		\nabla u = B^{m+1}_ie_i - B^sh_{si}e_i.
	\end{equation}
	Thus
	\begin{align*}
		u_{ik}\theta^k & = \di u_i - u_t\theta^t_i = \di B^{m+1}_i - \di (B^th_{ti}) - (B^{m+1}_t - B^sh_{st})\theta^t_i \\
		& = (B^{m+1}_{ik} - B^s_ih_{sk} - B^s_kh_{si} - B^sh_{sik} - uh_{ks}h_{si})\theta^k.
	\end{align*}
	Using (\ref{B:killing}) it follows that
	\begin{equation} \label{B:laplacianu1}
		\Delta u = -B^th_{tkk} - |\II|^2u + B^{m+1}_{kk}.
	\end{equation}
	We recall the commutation relations
	\begin{equation} \label{B:commrel}
		B^{m+1}_{ki}=-B^k_{m+1,i}=-B^k_{i,m+1}-B^a\bar R_{ak,m+1,i}
	\end{equation}
	and Codazzi equations
	\begin{equation} \label{B:codazzi}
		h_{tik}=h_{tki}-\bar R_{m+1,tik}.
	\end{equation}
	Tracing (\ref{B:commrel}) and (\ref{B:codazzi}) with respect to $k$ and $i$ and using (\ref{B:killing}) we have
	\begin{align*}
		B^{m+1}_{kk} & = -{\Ricbar}(B,\nu) \\
		-B^th_{tkk} & = -B^th_{ktk} = -B^th_{kkt} + B^t\bar R_{m+1,ktk}.
	\end{align*}
	Thus, substituting in (\ref{B:laplacianu1}) we finally obtain
	\begin{equation} \label{B:laplacianu2}
		\Delta u + |\II|^2u + \Ricbar(\nu,\nu)u = -B^sh_{kks}.
	\end{equation}
	Next, from the soliton equation (\ref{intro:soliton}),
	\begin{equation} \label{B:soliton}
		h_{kk} = cX^{m+1}
	\end{equation}
	where $X=X^ie_i+X^{m+1}\nu$ along $\psi$. On the other hand on $M$ we have
	$$
		X^{m+1}_i\theta^i = \di X^{m+1} + X^k\theta^{m+1}_k = \di X^{m+1} + X^kh_{ki}\theta^i.
	$$
	Differentiating equation (\ref{B:soliton}) we thus obtain
	$$
		h_{kki}=c(X^{m+1}_i-X^th_{ti})
	$$
	and using (\ref{B:killing}) we have
	$$
		X^iB^{m+1}_i-B^aX^{m+1}_a=X^aB^{m+1}_a-B^aX^{m+1}_a=\langle\nablabar_X B-\nablabar_B X,\nu\rangle=\langle[X,B],\nu\rangle.
	$$
	Inserting into (\ref{B:laplacianu2}) and using (\ref{B:killing}), (\ref{B:gradu}) we infer
	\begin{align*}
		\Delta u + |\II|^2u + \Ricbar(\nu,\nu)u & = -cB^iX^{m+1}_i+cX^th_{ti}B^i \\
		& = -c(X^iB^{m+1}_i-X^th_{ti}B^i)+c(X^iB^{m+1}_i-B^iX^{m+1}_i) \\
		& = -cX^i(B^{m+1}_i-h_{ti}B^t) \\
		& \phantom{=\;} +c(X^aB^{m+1}_a-B^aX^{m+1}_a+B^{m+1}X^{m+1}_{m+1}) \\
		& = -c\langle\nabla u,X^{\top}\rangle + \langle[cX,B],\nu\rangle+cX^{m+1}_{m+1}u
	\end{align*}
	and observing that
	$$
		cX^{m+1}_{m+1}=\langle\nablabar_{\nu}cX,\nu\rangle=\frac{1}{2}(\mathcal L_{cX}\metric_{\Mbar})(\nu,\nu)=-\frac{1}{2}(\mathcal L_{-cX}\metric_{\Mbar})(\nu,\nu)
	$$
	we obtain (\ref{B:uequation}).
\end{proof}

\begin{remark} \label{B:rmk:uequationgrad}
	In the particular case where $X=\nablabar\etabar$ for some $\etabar\in C^{\infty}(\Mbar)$, the tensor field $\Ricbar_{-cX}$ is the Bakry-Emery Ricci tensor \begin{equation} \label{B:Ricceta}
		\Ricbar_{-c\etabar} = \Ricbar - c\Hessbar(\etabar)
	\end{equation}
	and the differential operator $\Delta_{-cX^{\top}}$ is the weighted Laplacian $\Delta_{-c\eta}$, where $\eta=\etabar\circ\psi$. Hence, equation (\ref{B:uequation}) becomes
	\begin{equation} \label{B:uequationgrad}
		\Delta_{-c\eta}u+|\II|^2u+\Ricbar_{-c\etabar}(\nu,\nu)u=\langle[cX,B],\nu\rangle.
	\end{equation}
\end{remark}

The idea is now to couple equation (\ref{B:uequationgrad}) with a second equation of the same type and holding independently of the first. Towards this aim we consider the case of an oriented mean curvature flow soliton $\psi:M^m\to\Mbar^{m+1}$ with respect to $X=\nablabar\etabar$ for some $\etabar\in C^{\infty}(\Mbar)$ with constant $c$ and we compute $\Delta_{-c\eta}\eta$, where $\eta=\etabar\circ\psi$. We begin by computing $\Delta\eta$. The chain rule formula gives
\begin{equation} \label{B:laplacianeta}
	\Delta\eta = \sum_{i=1}^m\Hessbar(\etabar)(d\psi(e_i),d\psi(e_i))+\langle\nablabar\etabar, m \mathbf{H}\rangle_{\Mbar}
\end{equation}
where $\{e_i\}_{i=1}^m$ is a local orthonormal frame on $M$. On the other hand $\psi$ is a mean curvature flow soliton and thus
$$
	\langle m\Hvec,\nablabar\etabar\rangle_{\Mbar} = c\langle X^{\bot},\nablabar\etabar\rangle_{\Mbar} = c\langle\nablabar\etabar^{\bot},\nablabar\etabar^{\bot}\rangle = c(|\nablabar\etabar|^2-|\nabla\eta|^2).
$$
Using (\ref{B:laplacianeta}) we can therefore write
$$
	\Delta_{-c\eta}\eta = \Delta\eta+c|\nabla\eta|^2=\overline{\Delta}\etabar-\Hessbar(\etabar)(\nu,\nu)+c|\nablabar\etabar|^2,
$$
In the particular case $\Mbar=I\times_h\PP$ and $X=h(t)\partial_t$, we have $X=\nablabar\etabar$ with $\etabar$ defined by \eqref{B:defetabar} for some $t_0\in I$. Thus $X$ is conformal and
$$
	\Hessbar(\etabar) = h'(\pi_I) \metric_{\Mbar}.
$$
From $|\nablabar\etabar|^2=h(\pi_I)^2$ we therefore obtain
\begin{equation} \label{B:etaequation}
\Delta_{-c\eta}\eta = m h'(t)+ch(t)^2
\end{equation}
along $\psi$.\par

In the next results, under suitable geometric assumptions we will infer a Liouville theorem for the function $\langle \nu, B \rangle$, with $B$ Killing. The geometric rigidity that follows from it depends on the next version of a splitting theorem investigated, in the intrinsic setting, by Tashiro
\cite{Ta} and Obata \cite{Ob}.

\begin{lemma}\label{lem_splitting}
	Let $\psi : M^m \ra \Mbar^{m+1}$ be an orientable, connected, complete immersed hypersurface, and let $f \in C^\infty(\Mbar)$ satisfy
	$$
		\overline{\Hess} f = \mu \metric_{\Mbar},
	$$
	for some $\mu \in C^\infty(\Mbar)$. If $\nablabar f$ is tangent to $M$ and does not vanish identically, then the set $\mathscr{C}_f \subset M$ of critical points of the restriction of $f$ to $M$ consists of at most two points,  $M \backslash \mathscr{C}_f$ is isometric to the warped product
	$$
		I \times_h \Sigma, \qquad \text{with metric} \quad \metric = \di t^2 + h(t)^2 g_\Sigma,
	$$
	$I \subset \RR$ is an open interval, $\Sigma$ is a (regular) level set of $f$ with induced metric $g_\Sigma$, $\partial_t = \nabla f/|\nabla f|$, $f$ only depends on $t$ and $h(t) = f'(t)$. In particular, in these coordinates $\mu(t,y) = f''(t)$. Moreover, 
	\begin{itemize}
		\item[-] if $|\mathscr{C}_f|=0$, then $I = \RR$,
		\item[-] if $|\mathscr{C}_f|=1$, then $I = (t_1,\infty)$ or $(-\infty, t_2)$, for some $t_1 < 0$, $t_2>0$, and $M$ is globally conformal to either $\RR^m$ or the hyperbolic space $\mathbb{H}^m$,
		\item[-] if $|\mathscr{C}_f|=2$, then $I = (t_1,t_2)$ for some $t_1 < 0 < t_2$, and $M$ is globally conformal to the sphere $\sphere^m$.
	\end{itemize}
	Furthermore, in coordinates $(t,y) \in I \times \Sigma$ the immersion $\psi$ writes as
	\begin{equation}\label{splitting_tashi}
		\psi(t,y) = \Phi\big( t, \psi(0,y) \big) 
	\end{equation}
	where $\Phi$ is the flow of $\nablabar f/|\nablabar f|$ on $\Mbar \backslash \mathscr{C}_{\bar f}$.
\end{lemma}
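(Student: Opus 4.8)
The plan is to reduce the statement to the classical theorem of Tashiro \cite{Ta} and Obata \cite{ob} applied to the restriction $w := f\circ\psi\in C^\infty(M)$, and then to recall that theorem, paying attention to the few points where the immersion actually enters. The only genuinely ``new'' observation is that $w$ satisfies an equation of the same type as $f$: since $\nablabar f$ is tangent to $M$ we have $\langle\nablabar f,\nu\rangle = 0$, hence $\nabla w = (\nablabar f)^\top = \nablabar f$ along $\psi$, and the Gauss formula gives, for $X,Y$ tangent to $M$,
\[
	\Hess w(X,Y) = \langle\nabla_X\nabla w,Y\rangle = \langle\nablabar_X\nablabar f,Y\rangle = \Hessbar f(X,Y) = \mu\langle X,Y\rangle,
\]
the second fundamental form contributing nothing because $\nablabar f$ has no normal component. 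Thus $(M,\metric_M)$ is a complete manifold carrying the non-constant function $w$ with $\Hess w = (\mu|_M)\,\metric_M$, which is exactly the Tashiro--Obata situation.

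Next I would extract the one-dimensional model. From $\Hess w = (\mu|_M)\metric_M$ one gets $\nabla_{\nabla w}\nabla w = (\mu|_M)\nabla w$ and $\nabla|\nabla w|^2 = 2(\mu|_M)\nabla w$, so $|\nabla w|^2$ is constant along connected components of the level sets of $w$, and the integral curves of $T := \nabla w/|\nabla w|$ are unit-speed geodesics. Writing $t$ for arclength along such a curve and $h(t) := |\nabla w| = w'(t)$, one finds $h'(t) = \mu|_M$ and $w''(t) = \mu|_M$; in particular $\mu|_M$, restricted to one trajectory, is a function of $t$ alone, equal to $f''(t)$. By completeness each maximal trajectory is defined on an interval $(t_-,t_+)$ on which $h>0$, with $h\to 0$ at any finite endpoint.

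Then comes the analysis of the critical set $\mathscr{C}_f = \{p\in M:\nabla w(p)=0\}$. Integrating $\tfrac{D}{\di r}\nabla w = (\mu|_M)\dot\gamma$ along geodesics $\gamma$ issuing from $p\in\mathscr{C}_f$ shows $\nabla w$ stays radial, so near $p$ the regular level sets of $w$ are geodesic spheres about $p$, $p$ is an isolated non-degenerate extremum, and smoothness of geodesic polar coordinates forces a regular level set $\Sigma$ to be a round $(m-1)$-sphere and $M$ to be rotationally symmetric about $p$. A maximal trajectory can only limit, as $t\to t_\pm$, to a point of $\mathscr{C}_f$, and all trajectories approaching a given critical point come from the same side (small geodesic spheres about it being connected); hence $M\setminus\mathscr{C}_f$ and the regular level sets are connected and $|\mathscr{C}_f|\le 2$. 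Fixing a regular level set $\Sigma = \{w=w_0\}$ (with $t=0$ on it after a shift) and flowing it by $T$, the map $(t,y)\mapsto\Phi^M_t(y)$ is a diffeomorphism of an interval $I$ times $\Sigma$ onto $M\setminus\mathscr{C}_f$; since $T$ is a unit field normal to the slices and, for $X,Y$ tangent to a slice, $(\mathcal L_T\metric_M)(X,Y) = \tfrac{2}{h}\Hess w(X,Y) = \tfrac{2h'}{h}\langle X,Y\rangle$, the pulled-back metric takes the form $\di t^2 + h(t)^2 g_\Sigma$ (up to rescaling $g_\Sigma$), with $h = f'$ and $\mu = f''$. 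Finally, since $\nablabar f/|\nablabar f|$ is tangent to $M$ on $M\setminus\mathscr{C}_{\bar f}$, its flow $\Phi$ preserves $M$ and restricts there to $\Phi^M$, which is \eqref{splitting_tashi}.

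The trichotomy is then read off through the substitution $\di\rho = \di t/h(t)$, under which $\di t^2 + h^2 g_\Sigma = h^2(\di\rho^2 + g_\Sigma)$: if $\mathscr{C}_f=\emptyset$, $h$ never vanishes and completeness forces $I=\RR$, an honest warped product; if $|\mathscr{C}_f|=1$, $I$ is a half-line, $h$ vanishes linearly at its finite end, $\rho$ ranges over $\RR$ or a half-line according as the improper integral of $1/h$ toward the open end diverges or converges, and $h^2(\di\rho^2 + g_{\sphere^{m-1}})$ is conformal to $\RR^m$ or to the Euclidean unit ball $\cong\HH^m$ (the cone point being filled in); if $|\mathscr{C}_f|=2$, $I$ is bounded, $h$ vanishes linearly at both ends, $\rho$ ranges over $\RR$, and $h^2(\di\rho^2 + g_{\sphere^{m-1}})$ is, up to conformal factor, the round cylinder metric, hence conformal to $\RR^m\setminus\{0\}$, i.e.\ to $\sphere^m$ after adding the two poles. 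The main obstacle I expect is the local analysis at the critical points — the radial behaviour of $\nabla w$, the roundness of the level sets via geodesic polar coordinates, and the ``at most two'' count — together with the globalization step (connectedness of $M\setminus\mathscr{C}_f$ and of the regular level sets, and identifying the flow of $\nablabar f/|\nablabar f|$ with $\Phi^M$); the conformal classification is then routine bookkeeping.
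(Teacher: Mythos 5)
Your proposal is correct and takes essentially the same route as the paper: the key step — observing that, since $\nablabar f$ is tangent to $M$, the restriction $w=f\circ\psi$ satisfies $\Hess w=\mu\,\metric_M$ by the Gauss formula, so the problem becomes the intrinsic Tashiro--Obata situation — is exactly the paper's (one-line) argument. The only difference is that the paper then simply cites Tashiro (Lemma 1.2 and Section 2 of \cite{Ta}) for the bound $|\mathscr{C}_f|\le 2$, the warped-product splitting and the conformal trichotomy, and notes that \eqref{splitting_tashi} follows at once, whereas you sketch that classical argument in detail; your sketch is sound but adds nothing beyond the citation.
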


\begin{proof}
	First observe that $X= \nabla f$ is a gradient, conformal field also on $M$: 
	$$
		\Hess f(V,W) = \langle \nabla_V \nabla f, W \rangle = \langle \bar \nabla_V \nablabar f, W \rangle = \overline{\Hess} f(V,W).
	$$
	The completeness of $M$ together with Lemma 1.2 and Section 2 in \cite{Ta} guarantee that $\mathscr{C}_f$ has at most $2$ points, and that $M \backslash \mathscr{C}_f$ splits as indicated. The identity \eqref{splitting_tashi} is a straightforward consequence.
\end{proof}

Recalling that a self-shrinker in $\RR^{m+1}$ is a mean curvature flow soliton with respect to the position vector field of $\RR^{m+1}$ with soliton constant $c<0$, we have the following result reported in the Introduction.

\begin{theorem}[\cite{DXY}, Thm. 3.1]\label{B:thm:euclid}
	Let $\psi:M^m\to\RR^{m+1}$ be an oriented, connected, complete proper self-shrinker whose spherical Gauss map $\nu$ has image contained in a closed hemisphere of $\sphere^m$. Then, either $\psi$ is totally geodesic or $\psi$ is a cylinder over a proper, connected, complete self-shrinker in $\RR^m$.
\end{theorem}

\begin{remark}\label{rem_chengzhou}
	We remark that, by \cite{CZ}, the properness of a complete immersed self-shrinker $\psi : M^m \ra \RR^{m+1}$ is equivalent to any of the following properties:
	\begin{itemize}
		\item[(i)] $M$ has Euclidean volume growth of geodesic balls,
		\item[(ii)] $M$ has polynomial volume growth of geodesic balls,
		\item[(iii)] $M$ has finite weighted volume:
		$$
		\int_M e^{ c \frac{|\psi|^2}{2}} < \infty.
		$$
	\end{itemize}
	Note that we used a different normalization with respect to \cite{CZ}, where the soliton field is half of the position vector field and thus $|\psi|^2/4$ appears in $(iii)$.
\end{remark}

\begin{proof}
	Since $\psi$ is proper, the pre-image $K = \psi^{-1}(0)$ is compact. We then consider $\psi:M \backslash K \to\Mbar$ with $\Mbar=\RR^{m+1}\setminus\{0\}=\RR^+\times_h\sphere^m$ where $h(t)=t$. The restriction $\psi$ is a mean curvature flow soliton with respect to $X=t\partial_t$, with $c<0$. Now, up to an inessential additive constant, for $y\in\Mbar$, $\etabar(y)=\frac{1}{2}t(\pi_{\RR^+}(y))^2$ and since $\psi$ is proper
	$$
		\eta(x)=\etabar(\psi(x))\to+\infty \quad \text{as } x \to\infty \text{ in } M.
	$$
	From (\ref{B:etaequation}) and $c<0$ we then deduce
	$$
		\Delta_{-c\eta}\eta(x)=m+2c\eta(x)\to-\infty \quad \text{as } x\to\infty \text{ in } M.
	$$
	The above conditions imply, by Theorem 4.12 of \cite{AMR}, that the operator $\Delta_{-c\eta}$ is parabolic on $M$. Since by assumption $\nu(M)$ is contained in a closed hemisphere of $\sphere^m$, there exists a unit vector $b\in\sphere^m$ such that $\langle b,\nu\rangle \ge 0$. We extend $b$ to a parallel field $B$ on $\RR^{m+1}$. Let $\{x^a\}_{a=1}^{m+1}$ be a Cartesian chart on $\RR^{m+1}$ centered at $o$ such that $B=\partial_{x^1}$. Then $X=x^a\partial_{x^a}$ and we have
	$$
		[X,B]=[x^a\partial_{x^a},\partial_{x^1}]=x^a[\partial_{x^a},\partial_{x^1}]-(\partial_{x^1}x^a)\partial_{x^a}=-\partial_{x^1}=-B.
	$$
	Since $\Hessbar(\etabar)=\metric_{\Mbar}$, using (\ref{B:uequationgrad}) and (\ref{B:Ricceta}) we deduce that $u=\langle B,\nu\rangle$ is a non-negative solution of
	\begin{equation}\label{neg_firsteigen}
		\Delta_{-c\eta}u+|\II|^2u-cu=\langle-cB,\nu\rangle=-cu
	\end{equation}
	that is,
	$$
		\Delta_{-c\eta}u = - |\II|^2u \le 0.
	$$
	Thus, the $\Delta_{-c\eta}$-parabolicity implies that $u$ is a non-negative constant. If $u>0$, then $|\II|^2\equiv 0$. Otherwise, from $0 \equiv u = \langle \nu, B \rangle$ we deduce that $B$ is tangent to $M$, and by Lemma \ref{lem_splitting} it follows that $M$ is a cylinder $\RR \times \Sigma$ and that the immersion factorizes. The fact that $\Sigma$ is a self-shrinker in $\RR^m$ is a straightforward consequence.
\end{proof}

Note that we have used equation (\ref{B:etaequation}) to show that on the complete manifold $M$ the operator $\Delta_{-c\eta}$ is parabolic. However, from Theorem 4.1 of \cite{AMR}, we know that a second sufficient condition is given by
$$
	\frac{1}{\int_{\partial B_r}e^{c\eta}} \notin L^1(+\infty),
$$
where $\partial B_r$ is the boundary of the geodesic ball $B_r$ centered at some fixed origin $o \in M$.
That, in turn, is implied, see for instance Proposition 1.3 of \cite{RS}, by
\begin{equation} \label{parab_peso}
	\int_{B_r} e^{c\eta} = \mathcal{O}(r^2) \qquad \text{as } \, r \ra \infty,
\end{equation}
in particular, by
\begin{equation} \label{B:L1parab}
	e^{c\eta}\in L^1(M).
\end{equation}
In the case of self-shrinkers for the position vector field in $\RR^{m+1}$ we readily recover condition $(iii)$ in Remark \ref{rem_chengzhou}. However, the equivalence in Remark \ref{rem_chengzhou} is a specific feature of the self-shrinkers case.

The situation for \emph{translational} solitons with Gauss map contained in a hemisphere is less rigid, since there are examples of translational solitons in $\RR^{m+1}$ that are convex, entire graphs over a totally geodesic hyperplane (cf. \cite{AWu, CSS, W}). However, if the image of the Gauss map is compactly contained into an open hemisphere, the immersion is still totally geodesic,  see \cite{BS}. When the Gauss map is just contained in a  closed hemisphere, then we can still obtain a rigidity theorem provided that the second fundamental form grows sufficiently fast. This will be investigated in Theorem \ref{B:thm:euclidtrans} below.

Nevertheless, for more general classes of warped products with nonnegative curvature one can still obtain interesting results. By way of example, we consider the following

\begin{proposition} \label{B:prp:prod}
	Let $\Mbar= \RR\times\PP$ be a product manifold with
	\begin{equation}\label{riccP}
		\Ric_\PP\geq 0
	\end{equation}
	and let $\psi:M^m\to\Mbar^{m+1}$ be a two-sided, connected, complete  translational soliton with respect to $\partial_t$ with soliton constant $c$. Suppose that the angle function $\langle\partial_t,\nu\rangle$ does not change sign on $M$ and that $M$ satisfies
	\begin{equation}\label{vol_subq}
		\int_{B_r} e^{c (\pi_I \circ \psi)} = o(r^2) \qquad \text{as } r \ra \infty
	\end{equation}
	for some (therefore, any) choice of an origin $o \in M$. Then, either
	\begin{itemize} 	
		\item[$(i)$] $M$ is a slice, or
		\item[$(ii)$] $\partial_t$ is tangent to $M$ and $M$ splits as a cylinder in the $t$-direction, or
		\item[$(iii)$] $\psi$ is totally geodesic, there exists a Riemannian covering map $\pi_0 : \PP_0 \ra \PP$ with $\PP_0 = \RR \times \Sigma$ for some compact $\Sigma$, and there exists a totally geodesic immersion $\psi_0 : M \ra \RR \times \PP_0$ such that $\psi = (\mathrm{id} \times\pi_0) \circ \psi_0$. Moreover, in coordinates $(s,y) \in \RR \times \Sigma$, $\psi_0(M)$ writes as the graph of a function $v : \PP \ra \RR$ of the type $v(s,y) = as+b$, for some $a \in \RR\backslash\{0\}$, $b \in \RR$.
	\end{itemize}
\end{proposition}

\begin{remark}
	As a particular case, \eqref{vol_subq} is satisfied under the assumption that $M$ is contained into a half-space $[a, \infty) \times \PP$, $c \le 0$ and $M$ has less than quadratic \emph{Riemannian} volume growth: $\vol(B_r) = o(r^2)$ as $r \ra \infty$. 
\end{remark}

\begin{proof}
	The non-negativity of $\Ric_\PP$ is equivalent to that of the ambient Ricci curvature $\overline{\Ric}$. Set $u = \langle \nu, \partial_t \rangle$. Since $h\equiv 1$,
	$$
		\Ricbar_{-c\etabar}(\nu,\nu) = \Ricbar(\nu,\nu) \geq 0,
	$$
	thus (\ref{B:uequationgrad}) becomes
	\begin{equation} \label{B:uequationprod}
		\Delta_{-c\eta}u + (|\II|^2+\Ricbar(\nu,\nu))u = 0.
	\end{equation}
	Up to changing $\nu$ we can suppose $0 \le u \le 1$, thus either $u \equiv 0$ and $M$ is invariant by $\partial_t$ (so $(ii)$ holds by Lemma \ref{lem_splitting}), or $0 < u \le 1$ on $M$. From \eqref{vol_subq} and the discussion in the previous pages we deduce that $\Delta_{-c\eta}$ is parabolic, thus $u$ is a positive constant and $\II\equiv0$ follows from \eqref{B:uequationprod}. The soliton equation then implies $c=0$. If $u \equiv 1$ then $M$ is a slice, and $(i)$ occurs. Otherwise, $u \equiv k \in (0,1)$, so the projection $\pi_\PP : \RR \times \PP \ra \PP$ restricted to $M$ is a local diffeomorphism. However, $M$ might not be a global graph over $\PP$. To overcome the problem, we let $\PP_0$ be $M$ endowed with the metric $\pi^* \metric_\PP$, and note that $\pi : \PP_0 \ra \PP$ is a covering because of the completeness of $\PP$. Introducing the covering $\RR \times \PP_0$ of $\RR \times \PP$, we can lift $\psi$ to a (totally geodesic) soliton $\psi_0 : M \ra \RR \times \PP_0$, and in this way $M$ becomes \emph{globally} the graph of a function $v : \PP_0 \ra \RR$. The second fundamental form of the graph is $\II = \pm (1+ |Dv|^2)^{-1/2}\Hess_\PP v$, where $D$ is the connection on $\PP$, thus $\Hess_\PP v \equiv 0$. Furthermore, $Dv \neq 0$ at every point of $\PP_0$ (because $u < 1$), hence $Dv$ realizes a Riemannian splitting $\PP_0 = \RR \times \Sigma$ for some totally geodesic $\Sigma$. Moreover, in realizing the splitting one observes that $v$ is linear in the $\RR$ coordinate and independent of $\Sigma$ (cf. Step 3 in the proof of Thm. 9.3 in \cite{PRS}, or \cite[Thm. 5]{FMV}). We claim that $\Sigma$ is compact. Otherwise, having observed that \eqref{riccP} implies $\Ric_\Sigma \ge 0$, if $\Sigma$ were not compact the Calabi-Yau volume estimate would imply that the volume of geodesic balls in $\Sigma$ grows at least linearly in the radius, hence 
	$$
		\vol_{\PP_0}(B_r) \ge B r^2 \qquad \text{for } \, r \ge 1,
	$$
	for some constant $B >0$. However, from \eqref{vol_subq} and $c=0$, we infer
	$$
		o(r^ 2) = \int_{B_r} e^{c(\pi_I \circ \psi)} = \vol_M(B_r)  \ge \vol_{\PP_0}(B_r) \ge B r^2, 
	$$
	contradiction. Hence, $\Sigma$ is compact. 
\end{proof}

Next, we consider rigidity results when $\Delta_{-c\eta}$ is possibly non-parabolic. To achieve the goal, we focus on the spectral properties of associated stationary Schr\"o\-ding\-er operators of the type
$$
	\Delta_{-c\eta} + a(x),
$$
for suitable potentials $a(x)$. We first recall some facts taken from \cite{BMR1} and \cite{BMR}. For $f,a \in C^0(M)$ consider the weighted Laplacian
$$
	\Delta_f = \Delta - \langle \nabla \; \cdot \; , \nabla f \rangle,
$$
that is symmetric on $C^\infty_c(M)$ if we integrate with respect to the weighted measure $e^{-f}\di x$, and let $L = \Delta_f + a(x)$. Given an open subset $\Omega \subset M$, the bottom of the spectrum of $(L, C^\infty_c(\Omega))$ is variationally defined by the Rayleigh quotient
\begin{equation}
	\lambda_1^{L}(\Omega) = \inf_{\substack{\varphi\in C^{\infty}_c(\Omega) \\ \varphi\not\equiv0}}\frac{Q_L(\varphi)}{\int_M \varphi^2 e^{-f}}, \qquad \text{with} \quad Q_L(\varphi) = \int_M\left[|\nabla\varphi|^2- a(x)\varphi^2\right]e^{-f}.
\end{equation}
We say that $L \ge 0$ on $\Omega$ if $\lambda_1^L(\Omega) \ge 0$. By a minor adaptation of a result in \cite{FCS} and \cite{MP}, $L \ge 0$ if and only if there exists a positive solution of 
$$
	\Delta_f u + a(x) u =0 \qquad \text{on } \, \Omega,
$$
and equivalently, if and only if there exists a positive solution of $\Delta_f u + a(x) u \le 0$ of class $H^1_\loc(\Omega)$.\par
The Morse index of $L$, $\ind_L(M)$, is defined as the limit 
$$
	\ind_L(M) = \lim_{j \ra \infty} \ind_L(\Omega_j), 
$$
where $\Omega_j \uparrow M$ is an exhaustion of $M$ by relatively compact, smooth open domains (the limit is independent of the exhaustion), and $\ind_L(\Omega_j)$ is the number of negative eigenvalues of $L$ on $\Omega_j$. If $L$ is bounded from below on $C^\infty_c(M)$, $\ind_L(M)$ coincides with the index of the Friedrichs extension of $L$ originally defined on $C^\infty_c(M)$, that is, the only self-adjoint extension of $L$ whose domain lies in the one of the associated quadratic form $Q_L$ (for more details, see Chapter 3 in \cite{PRS} and \cite[Sect. 1.3]{BMR}). We say that $L$ has finite index if $\ind_L(M) < \infty$. By work of \cite{D} and \cite[Thm. 1.41]{BMR}, $L$ has finite index if and only if there exists a compact set $K \subset M$ such that $L \ge 0$ on $M \backslash K$, see also \cite{FC, Pie}.\par

We are ready to prove Theorem \ref{B:thm:inv} from the Introduction.

\begin{proof}[Proof of Theorem \ref{B:thm:inv}]
	Up to possibly changing $t$ to $-t$, we consider \eqref{B:uequationgrad} for $u = \langle \nu, X \rangle \ge 0$ restricted to the end $E$, with $X=B=\partial_t$:
	\begin{equation} \label{B:uequationprod2}
		Lu = \Delta_{-c\eta}u + (|\II|^2+\Ricbar(\nu,\nu))u = 0 \quad \text{on } E.
	\end{equation}
	By the strong maximum principle, either $u \equiv 0$ or $u>0$ on $E$, and in the second case we infer $\lambda_1^L(E) \ge 0$. We shall prove that $\lambda_1^L(E) <0$, so $u\equiv0$ on $E$ and, by unique continuation, $u \equiv 0$ on the entire $M$. The conclusion of the Theorem follows from Lemma \ref{lem_splitting}, taking into account that $X$ is nowhere vanishing.
	To estimate $\lambda_1^L(E)$, let $R>0$ be large enough so that $\partial E \subseteq B_R(o)$. This is possible because $\partial E \subseteq \partial K$ is compact and $M$ is complete. We let $v_E(r), A_E(r)$ be defined as in \eqref{def_VE}, \eqref{B:ameansdef}, with $\eta = \etabar\circ\psi$ and $\etabar$, $a(x)$ as in \eqref{baretaax}.
	We consider the Cauchy problem
	\begin{equation} \label{B:PC}
		\begin{cases}
			(v_Ez')' + A_Ev_Ez = 0 & \text{on } \, [R,\infty) \\
			z(R) = 1 > 0, \quad v_E(R)z'(R^+) = 0.
		\end{cases}
	\end{equation}
	We observe that $v_E\in L^{\infty}_{\loc}([R, \infty))$ and, by Proposition 1.4 of \cite{BMR}, we also have $v_E^{-1} \in L^{\infty}_{\loc}([R, \infty))$. Hence (\ref{B:PC}) has a solution $z \in \Lip_\loc([R, \infty))$. As such, by Proposition 4.6 of \cite{BMR}, $z$ has, if any, only isolated zeros. Under the assumptions in $(i)$, using the co-area formula we have
	\begin{equation} \label{B:intAv}
		\int_{R}^r A_E(s)v_E(s)\di s = \int_{E\cap(B_r\setminus \overline{B}_{R})}\left(\Ricbar(\nu,\nu)+|\II|^2\right)e^{c\eta} \to +\infty
	\end{equation}
	as $r\to \infty$. From \eqref{B:intAv} and $v_E^{-1} \not \in L^1(\infty)$, we deduce that $z$ is oscillatory by \cite[Cor. 2.9]{MMR}. Similarly, $z$ is oscillatory under assumption $(ii)$, by \cite[Thm. 5.6]{BMR}. Choose any two consecutive zeros $R<R_1<R_2$ of $z$, and define $\varphi(x)=z(r(x))\in\Lip_{\loc}(M)$. Since $\partial E \subseteq B_R$, we have  $\partial(E \cap (B_{R_2}\setminus\overline{B}_{R_1})) = (E\cap\partial B_{R_2}) \cup (E\cap\partial B_{R_1})$ and by the Rayleigh variational characterization and the co-area formula we have
	\begin{align*}
		\lambda_1^L(E \cap (B_{R_2}\setminus\overline{B}_{R_1}) ) & \leq \frac{\int_{E \cap (B_{R_2}\setminus\overline{B}_{R_1})}e^{c\eta}\left[|\nabla\varphi|^2-\left(\Ricbar(\nu,\nu)+|\II|^2\right)\varphi^2\right]}{\int_{E \cap (B_{R_2}\setminus\overline{B}_{R_1})}e^{c\eta}\varphi^2} \\
		& = \frac{\int_{R_1}^{R_2}\left[|z'(s)|^2v_E(s)-A_E(s)v_E(s)z(s)^2\right] \di s}{\int_{R_1}^{R_2}z(s)^2v_E(s) \, \di s} = 0
	\end{align*}
	as immediately seen by multiplying the equation in (\ref{B:PC}) by $z$, integrating by parts and using $z(R_1)=z(R_2)=0$. As a result, by the monotonicity property of eigenvalues,
	\begin{equation} \label{B:1eigenvalue}
		\lambda_1^L(E) < \lambda_1^L(E \cap ( B_{R_2}\setminus\overline{B}_{R_1} ) ) \leq 0,
	\end{equation}
	as required.
\end{proof}

With a similar proof, we also deduce the following

\begin{theorem} \label{B:thm:euclidtrans}
	Let $\psi:M^m\to\RR^{m+1}$ be an oriented, connected, complete translational soliton in direction $X \in \sphere^m$ with soliton constant $c$. Suppose that there exists some end $E \subset M$ such that the image of $E$ via the Gauss map is contained in a closed hemisphere of $\sphere^m$ with center $b \in \sphere^m$. If 
	\begin{equation}\label{inte_pertranslation}
		\left( \int_{\partial B_r \cap E} e^{c\langle \psi, X\rangle} \right)^{-1} \not \in L^1(\infty), \qquad \lim_{r\to+\infty}\int_{E \cap B_r}e^{c \langle \psi, X\rangle}|\II|^2 =+\infty
	\end{equation}
	for some fixed origin $o \in M$, then $\psi$ splits as a Riemannian product $\RR \times \Sigma^{m-1}$ and, under this isometry, $\psi: \RR \times \Sigma \ra \RR^{m+1}$ can be written as the cylinder
	\begin{equation}\label{splitting2}
		\psi(t,y) = \psi(0,y) + tb.
	\end{equation}
	In particular, $\Sigma$ is a mean curvature flow soliton in the hyperplane $b^\perp$ with respect to $X - \langle X, b \rangle b$ with constant $c$ (and $\Sigma$ is minimal if $X=b$).
\end{theorem}

\begin{proof}
	The proof is identical, using as $B$ the parallel translation of $b$ and noting that $[X, B] =0$. Lemma \ref{lem_splitting} guarantees the splitting in the direction of $B$, and the fact that $\Sigma$ is a soliton in the hyperplane $b^\perp$ is a straightforward consequence. 
\end{proof}

\begin{theorem} \label{B:thm:IIbound}
	Let $\psi:M^m\to\RR^{m+1}$ be an oriented, connected, complete mean curvature flow soliton with respect to the position vector field of $\RR^{m+1}$ with soliton constant $c$. Assume that, for some fixed origin $o \in M$,
	\begin{equation} \label{B:volgrowthsphere}
		\liminf_{r \ra \infty} \frac{ \log \int_{B_r} e^{c\frac{|\psi|^2}{2}}}{r} = \alpha \in [0, \infty),
	\end{equation}
	where $B_r$ is the geodesic ball of $M$ centered at $o$. Suppose that $\langle B,\nu\rangle$ has constant sign for a chosen unit normal $\nu$ to $M$ and a parallel unit field $B$. If 
	$$
		\alpha < 2\inf_M|\II|,
	$$
	then, $M$ splits as a cylinder $\RR \times \Sigma$, the immersion writes as $\psi(t,y) = \psi(0,y) + tB$ and the restriction $\psi_\Sigma : \Sigma \ra B^\perp = \RR^m$ is itself a soliton with respect to the position field of $\RR^m$ with constant $c$.
\end{theorem}

\begin{proof}
	As we saw in the proof of Theorem \ref{B:thm:euclid}, (\ref{B:uequationgrad}) with $u=\langle B,\nu\rangle$ and $\eta=\frac{|\psi|^2}{2}$ becomes
	\begin{equation} \label{B:uequationeuclid}
		\Delta_{-c\eta}u+|\II|^2u=0.
	\end{equation}
	Supposing, without loss of generality, that $u\geq0$, by the usual maximum principle and equation (\ref{B:uequationeuclid}) we have that either $u\equiv0$ or $u>0$ on $M$. In the latter case, using a minor variation of a result of Barta, \cite{B}, we have
	$$		
		\lambda_1^{\Delta_{-c\eta}}(M) \geq \inf_M\left(-\frac{\Delta_{-c\eta}u}{u}\right) = \inf_M |\II|^2,
	$$
	where the last equality is due to (\ref{B:uequationeuclid}). However, by adapting to the weighted setting a criterion due to R. Brooks and Y. Higuchi \cite{brooks, higuchi} (cf. \cite{rocha}), if \eqref{B:volgrowthsphere} holds then
	\begin{equation}\label{eq_higu}
		\lambda_1^{\Delta_{-c\eta}}(M) \le \frac{\alpha^2}{4}.
	\end{equation}
	Coupling the last two inequalities we contradict our assumption $\alpha < 2 \inf_M|\II|$. Therefore, $u \equiv 0$ and, by Lemma \ref{lem_splitting}, $M$ splits as indicated. The fact that $\Sigma$ is a soliton in $B^\perp$ is easily established. 
\end{proof}

\begin{remark}
	The criterion in \cite{brooks, higuchi, rocha} is, indeed, an estimate on the bottom of the essential spectrum of $\Delta_{-c\eta}$ and also requires that the weighted volume $\vol_{-c\eta}(M)$ be infinite.  However, if $\vol_{-c\eta}(M) < \infty$ then clearly the bottom of the spectrum of $\Delta_{-c\eta}$ is zero, as constant functions are in $L^2$, and \eqref{eq_higu} still holds.
\end{remark}

\section{Stability}

The aim of this section is to study stable mean curvature flow solitons. Again we refer to Section 11 in \cite{AdeLR} and for the sake of simplicity we restrict ourselves to the case of codimension $1$ mean curvature flow solitons in a warped product space $\Mbar^{m+1}=I\times_h\PP$. For a given immersion $\psi:M^m\to\Mbar^{m+1}$ we let $\eta=\etabar\circ\psi$ with $\etabar$ defined in (\ref{B:defetabar}). For a fixed $c\in\RR$ and a relatively compact $\Omega\subseteq M$ we consider the weighted volume
\begin{equation} \label{stab:defwvol}
	\vol_{c\eta}(\Omega) = \int_{\Omega}e^{c\eta} \di x
\end{equation}
where $\di x$ is the volume element induced by the immersion $\psi$. The mean curvature flow soliton equation
$$
	m\Hvec=cX^{\bot}
$$
with $c$ the soliton constant, is the Euler-Lagrange equation for the functional (\ref{stab:defwvol}) with respect to variations of $\psi$ compactly supported in $\Omega$. See, for instance, Proposition 11.1 in \cite{AdeLR}. In the same Proposition we also compute the stability operator $L$ given by
\begin{equation} \label{stab:stoperator}
	L= \Delta_{-c\eta} + \Big( (|\II|^2+\Ricbar_{-c\eta}(\nu,\nu)\Big) = \Delta_{-c\eta}+\Big(|\II|^2+\Ricbar(\nu,\nu)-ch'(\pi_I\circ\psi)\Big)
\end{equation}
where $\nu$ is any (local) unit normal vector field to the stationary immersion $\psi$. It follows that the LHS of equation (\ref{B:uequationgrad}) is exactly the stability operator applied to the function $u$.

We say that $\psi$ is stable if $L \ge 0$ on $M$. We are interested in stable solitons in warped product ambient manifolds $\Mbar = \RR \times_h \PP$ with constant sectional curvature $\bar\kappa$. In this case, necessarily $\PP$ has constant sectional curvature too, say $\kappa$, and by Gauss equations
\begin{equation} \label{stab:constcurv}
	-\frac{h''}{h} = \bar \kappa = \frac{\kappa}{h^2} - \left(\frac{h'}{h}\right)^2,
\end{equation}
thus 
\begin{equation}\label{stab:gausseq}
	\kappa + h''h - (h')^2 \equiv 0.
\end{equation}

We shall use the next result in \cite[Thm. 4]{BPR}, whose proof is a refinement of that of Theorem 4.5 in \cite{PRS}.

\begin{theorem} \label{stab:thm:PRS}
	Let $(M,\metric,e^{-f})$ be a connected, complete weighted manifold with $f\in C^{\infty}(M)$. Let $a(x)\in L^{\infty}_{\loc}(M)$ and let $u\in\Lip_{\loc}(M)$ satisfy the differential inequality
	\begin{equation}
		u\Delta_f u + a(x)u^2 + A|\nabla u|^2 \geq 0 \quad \text{weakly on } M
	\end{equation}
	for some constant $A\in\RR$. Let $v\in\Lip_{\loc}(M)$ be a positive solution of
	\begin{equation} \label{stab:bigthmveq}
		\Delta_f v + \mu a(x)v \leq -K\frac{|\nabla v|^2}{v} \quad \text{weakly on } M
	\end{equation}
	for some constants $\mu$ and $K$ satisfying
	\begin{equation} \label{stab:bigthmparam}
		K>-1, \quad A+1\leq\mu(K+1), \quad \mu>0, \quad \mu\geq A+1.
	\end{equation}
	Suppose that one of the following conditions is satisfied:
	\begin{enumerate}
		\item $M$ is compact, or
		\item $M$ is non-compact and there exists $o \in M$ such that
		\begin{equation} \label{stab:bigthmint}
		\left(\int_{\partial B_r}v^{\frac{\beta+1}{\mu}(2-p)}|u|^{p(\beta+1)}e^{-f}\right)^{-1} \not\in L^1(+\infty)
		\end{equation}
		holds for some $p>1$ and $\beta$ satisfying
		\begin{equation}
		A \leq \beta \leq \mu(K+1)-1, \quad \beta > -1,
		\end{equation}
		with $B_r$ the geodesic ball of $M$ centered at $o$ with radius $r$.
	\end{enumerate}
	Then there exists a constant $\alpha\in\RR$ such that
	\begin{equation}
		\alpha v = \sgn u |u|^{\mu}.
	\end{equation}
	Furthermore,
	\begin{itemize}
		\item [(i)] if $A+1<\mu(K+1)$ then $u$ is constant on $M$ and if in addition $a(x)$ does not vanish identically, then $u\equiv0$;
		\item [(ii)] if $A+1=\mu(K+1)$ and $u$ doesn't vanish identically, then $v$ and therefore $|u|^{\mu}$ satisfy (\ref{stab:bigthmveq}) with the equality sign.
	\end{itemize}
\end{theorem}

\begin{remark}
	In the references given above, Theorem \ref{stab:thm:PRS} is proved in the non-compact case. However, in the compact case the proof simplifies, as no cut-off functions are needed.
\end{remark}

We are now ready to prove the next

\begin{theorem} \label{stab:thm:geod}
	Let $\Mbar=I\times_h\PP^m$ be a warped product space with constant sectional curvature $\bar\kappa$. Let $\psi:M^m\to\Mbar$ be a connected, complete, stable mean curvature flow soliton with respect to $X=h(t)\partial_t$ with soliton constant $c$. Assume that 
	\begin{equation}\label{stab:assu_cmkappa_H}
		\inf_M ch'(\pi_I\circ\psi)  < \sup_M ch'(\pi_I\circ\psi)\leq \frac{1}{2} m\bar\kappa 
	\end{equation}
	and $|\Hvec|\in L^2(M,e^{c\eta})$. Then $\Hvec\equiv0$.
\end{theorem}

\begin{proof}
	Since the stability operator $L$ is non-negative, there exists a positive smooth solution $v$ of the equation
	\begin{equation} \label{stab:positivesolution}
		L v = \Delta_{-c\eta}v + (|\II|^2+\Ricbar_{-c\eta}(\nu,\nu))v = 0 \quad \text{on } M
	\end{equation}
	where $\nu$ is any (local) normal to $M$. We note that
	$$
		\Ricbar_{-c\eta}(\nu,\nu) = \Ricbar(\nu,\nu) - ch'(\pi_I\circ\psi)\langle\nu,\nu\rangle_{\Mbar} = m\bar\kappa - ch'(\pi_I\circ\psi)
	$$
	Using that $\Mbar$ and $\PP$ have constant sectional curvature, respectively $\bar \kappa$ and $\kappa$, from Corollary 7.3 in \cite{AdeLR} one has the validity of the formula
	\begin{equation} \label{stab:Hsquareequationsimpl}
		\frac{1}{2}\Delta_{-c\eta}|\Hvec|^2 = -ch'|\Hvec|^2 - |\II_{\Hvec}|^2 + |\nabla^\perp\Hvec|^2,
	\end{equation}
	where $\II_\Hvec = \langle \II, \Hvec \rangle$. Since $M$ has codimension $1$, we respectively have
	$$
		|\II_{\Hvec}|^2 = |\Hvec|^2|\II|^2, \qquad |\nabla|\Hvec|^2|^2 = 4|\Hvec|^2|\nabla^\perp\Hvec|^2.
	$$
	Hence multiplying (\ref{stab:Hsquareequationsimpl}) by $|\Hvec|^2$ and using the above inequalities we deduce
	$$
		|\Hvec|^2\Big(\Delta_{-c\eta}|\Hvec|^2+2(|\II|^2+ch')|\Hvec|^2\Big) = \frac{1}{2}|\nabla|\Hvec|^2|^2.
	$$
	Setting $u=|\Hvec|^2$ we can rewrite the above as
	\begin{equation}\label{eq_1}
		u\Delta_{-c\eta}u + 2(|\II|^2+m\bar\kappa-ch')u^2 = \frac{1}{2}|\nabla u|^2 - 2(2ch'-m\bar\kappa)u^2,
	\end{equation}
	and because of \eqref{stab:assu_cmkappa_H} we deduce
	\begin{equation}\label{ineq_u}
		u\Delta_{-c\eta}u + 2(|\II|^2+m\bar\kappa-ch')u^2 \geq \frac{1}{2}|\nabla u|^2.
	\end{equation}
	This compares with (\ref{stab:positivesolution}) that we can rewrite in the form
	\begin{equation} \label{stab:positivesolution2}
		\Delta_{-c\eta}v + (|\II|^2+m\bar\kappa-ch')v = 0.
	\end{equation}
	We set
	$$
		a(x) = 2(|\II|^2+m\bar\kappa-ch')(x) \quad \text{on } M
	$$
	and let $\mu=\frac{1}{2}$, $K=0$, $A=-\frac{1}{2}$, $f=-c\eta$. If $M$ is compact, we can apply Theorem \ref{stab:thm:PRS} above. If $M$ is non-compact, we set $p=2$, $\beta=-\frac{1}{2}$ and, since in this case condition (\ref{stab:bigthmint}) reads as
	$$
		\left(\int_{\partial B_r}|u|e^{c\eta}\right)^{-1}\not\in L^1(+\infty),
	$$
	which is implied by $|\Hvec|^2\in L^1(M,e^{c\eta})$, so we can still apply Theorem \ref{stab:thm:PRS} to deduce that either $u\equiv0$, that is, $\Hvec\equiv0$, or $u^{\frac{1}{2}}$ satisfies (\ref{stab:positivesolution2}), that is,
	$$
		\Delta_{-c\eta}u^{\frac{1}{2}} + (|\II|^2+m\bar\kappa-ch')u^{\frac{1}{2}} = 0.
	$$
	Explicitating the equation with respect to $u$ we have
	$$
		u\Delta_{-c\eta}u + 2(|\II|^2+m\bar\kappa-ch')u - \frac{1}{2}|\nabla u|^2 = 0.
	$$
	By \eqref{eq_1}, we infer $2ch' - m\bar\kappa \equiv 0$ on $M$, contradicting \eqref{stab:assu_cmkappa_H}.
\end{proof}

By similar reasoning, but exploiting the Jacobi equation instead of \eqref{stab:Hsquareequationsimpl}, we recover the following well-known result for self-shrinkers in Euclidean space (cf. Theorem 9.2 in \cite{CM_generic}).

\begin{proposition} \label{stab:prp:euclid}
	There are no stable, complete oriented self-shrinkers $\psi:M^m\to\RR^{m+1}$ (with soliton constant $c<0$) satisfying
	\begin{equation} \label{stab:euclidint}
		e^{\frac{c}{2}|\psi|^2} \in L^1(M).
	\end{equation}
\end{proposition}


\begin{proof}
	It is enough to observe that, for each unit parallel field $B$, by \eqref{neg_firsteigen} $u=\langle B,\nu\rangle$ satisfies
	\begin{equation}\label{uno}
	\Delta_{-c\frac{|\psi|^2}{2}}u + (|\II|^2-c)u = -cu.
	\end{equation}
	Because of \eqref{stab:euclidint} and $|u| \le 1$, $u \in L^2(M, e^{c\eta})$ and is therefore an eigenfunction associated to the eigenvalue $c<0$. Alternatively, we can directly use Theorem \ref{stab:thm:PRS} with the choices
	\[
	a(x) = (|\II|^2-c)(x), \quad A=0, \quad \mu=1, \quad K=0,
	\]
	and
	\[
	\beta=0, \quad p=2 \quad \text{in case $M$ is non-compact,}
	\]
	noting that stability implies the existence of a positive smooth solution $v$ of the equation
	\begin{equation} \label{stab:positivesolution3}
	\Delta_{-c\frac{|\psi|^2}{2}}v + (|\II|^2-c)v = 0 \quad \text{on } M,
	\end{equation}
	that \eqref{uno} implies 
	\[
	u\Delta_{-c\frac{|\psi|^2}{2}}u + (|\II|^2-c)u^2 = -cu^2 \ge 0,
	\]
	and that in the non-compact case \eqref{stab:bigthmint} is satisfied because of \eqref{stab:euclidint}. Applying (ii) of Theorem \ref{stab:thm:PRS} to each connected component of $M$ we have that either $u\equiv0$ and $\psi(M)$ is orthogonal to $B$ or $u=\sgn u |u| \not\equiv 0$ and satisfies \eqref{stab:positivesolution3}. In the second case, we get $cu^2=0$, a contradiction since $c<0$. Thus $\psi(M)$ is orthogonal to $b$, an evident contradiction since $b$ is chosen arbitrarily. 
\end{proof}

We next come to the proof of Theorem \ref{teo_umbilico_intro} in the Introduction. To get more information besides stability and equation \eqref{stab:Hsquareequationsimpl}, we shall also use a Simons' type formula for mean curvature flow solitons (cf. Lemma 10.8 in \cite{CM_generic} for self-shrinkers in $\RR^{m+1}$, and \cite{CMZ} for general $f$-minimal hypersurfaces). We first introduce the following

\begin{lemma}\label{lem_kato}
	Let $A$ be a $2$-covariant Codazzi tensor field on a connected manifold $(M^m, \metric)$, that is, $A$ satisfies
	$$
	(\nabla_Z A)(X,Y) = (\nabla_Y A)(X,Z) \qquad \text{for each } \, X,Y,Z \in TM.
	$$
	Suppose that its traceless part $B = A - \frac{\mathrm{tr}A}{m} \metric$ matches the identity in Kato's inequality: 
	\begin{equation}\label{ide_kato}
	|\nabla |B||^2 = |\nabla B|^2.
	\end{equation}
	Then, either 
	\begin{itemize}
		\item[(i)] $B \equiv 0$ on $M$, or
		\item[(ii)] $|B|>0$ and $\nabla B = \nabla A = 0$ on $M$, or
		\item[(iii)] the subset $M_0 = \{ p \in M : |B|(p) \neq 0, \nabla|B|(p) \neq 0 \}$ is nonempty and each point $p\in M_0$ has a neighbourhood $U$ that splits as a Riemannian product $(-\varepsilon,\varepsilon) \times \Sigma$. Furthermore, $\metric$ and $A$ write as
		$$
		\metric = \di s \otimes \di s + \metric_{\Sigma}, \qquad	A = \mu_1(s) \, \di s \otimes \di s + \mu_2(\pi_{\Sigma}) \metric_{\Sigma},
		$$
		with $\pi_{\Sigma} : U \to \Sigma$ the projection onto $\Sigma$ and $\mu_1(s) \neq \mu_2(\pi_{\Sigma})$ for every point of $U$. If $m \geq 3$, then $\mu_2$ is constant on $\Sigma$.
	\end{itemize}
\end{lemma}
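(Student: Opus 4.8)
The plan is to convert the equality \eqref{ide_kato} into a pointwise recurrence relation for $B$, substitute it into the Codazzi identity to pin down the eigenstructure of $B$, and finally read off the local product structure. \emph{Step 1 (Kato equality $\Rightarrow$ recurrence; cases (i)--(ii)).} On the open set $\{|B|\neq0\}$ one has $|B|\,\partial_Z|B| = \langle\nabla_Z B,B\rangle$, hence by the Cauchy--Schwarz inequality applied direction by direction
\[
|\nabla|B||^2 \;=\; \frac{1}{|B|^2}\sum_i\langle\nabla_{e_i}B,B\rangle^2 \;\le\; \sum_i|\nabla_{e_i}B|^2 \;=\; |\nabla B|^2 ,
\]
with equality precisely when $\nabla_Z B$ is a multiple of $B$ for every $Z$. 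Thus \eqref{ide_kato} forces $\nabla_Z B = \omega(Z)\,B$ on $\{|B|\neq0\}$, with $\omega:=\di\log|B|$ exact, in particular closed, there. If $\{|B|\neq0\}=\emptyset$ then $B\equiv0$, which is case (i). If $\omega\equiv0$ on $\{|B|\neq0\}$, then $\nabla B\equiv0$ there; since $\nabla B$ vanishes also on the interior of $\{B=0\}$ and these two open sets are dense in $M$, continuity gives $\nabla B\equiv0$ on all of $M$, so $|B|$ is constant by connectedness: this yields case (i) if the constant is $0$, and case (ii) otherwise (here one also checks that $\nabla A = \tfrac1m\,\di(\trace A)\otimes\metric$, being totally symmetric by the Codazzi property, has $\di(\trace A)=0$ after a trace, so $\nabla A=0$ too). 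From now on $M_0 = \{\,|B|\neq0,\ \nabla|B|\neq0\,\} = \{\,|B|\neq0,\ \omega\neq0\,\}$ is nonempty; fix $p\in M_0$.

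\emph{Step 2 (Codazzi $\Rightarrow$ eigenstructure of $B$ near $p$).} Put $\phi=\trace A$, so $A=B+\tfrac\phi m\metric$ and $\nabla A = \omega\otimes B + \tfrac1m\,\di\phi\otimes\metric$. Since $A$ is symmetric and Codazzi, $\nabla A$ is totally symmetric (the transpositions $(1\,3)$ and $(2\,3)$ generate $S_3$). Contracting this identity in two different pairs of slots gives $\di\phi = \tfrac m{m-1}\,\iota_{\omega^\sharp}B$; substituting back and contracting once more along $\omega^\sharp$ gives $\tfrac{m-2}{m-1}\,\omega\wedge(\iota_{\omega^\sharp}B)=0$. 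For $m\ge3$, on $M_0$ this forces $\iota_{\omega^\sharp}B=\mu\,\omega$ for a function $\mu$, i.e. $n:=\omega^\sharp/|\omega^\sharp|$ is an eigenvector of $B$; reinserting this shows $B+\tfrac\mu{m-1}\metric$ is a symmetric multiple of $\omega\otimes\omega$, hence, comparing eigenvalues,
\[
B \;=\; \mu\, n^\flat\otimes n^\flat \;-\; \tfrac{\mu}{m-1}\bigl(\metric - n^\flat\otimes n^\flat\bigr),
\]
so on $M_0$ the tensor $B$ has the simple eigenvalue $\mu$ along $n$ and the eigenvalue $\lambda_2:=-\mu/(m-1)\neq\mu$ of multiplicity $m-1$ on $n^\perp$, with $|\mu|=\sqrt{(m-1)/m}\,|B|$ and $\di\mu=\mu\,\omega$. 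For $m=2$ one works in a local orthonormal frame $\{e_1,e_2\}$ diagonalising the traceless $2\times2$ tensor $B$ on $M_0$ (simple eigenvalues $\pm|B|/\sqrt2$): writing $\nabla e_1=\sigma\otimes e_2$, the relation $(\nabla_X B)(e_1,e_2)=\omega(X)B(e_1,e_2)=0$ reads $\sqrt2\,|B|\,\sigma(X)=0$, so $\sigma\equiv0$ on $M_0$.

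\emph{Step 3 (parallel distribution, local form, and the main obstacle).} The crux is Step 2; once it is in hand the rest is routine. For $m\ge3$: $B(n,e_a)\equiv0$ for $e_a\in n^\perp$, and expanding the covariant derivative gives $(\nabla_X B)(n,e_a)=(\mu-\lambda_2)\langle\nabla_X n,e_a\rangle$, which equals $\omega(X)B(n,e_a)=0$; as $\mu\neq\lambda_2$, $\nabla_X n\perp n^\perp$, while $\nabla_X n\perp n$ because $|n|=1$, so $\nabla n\equiv0$ on $M_0$. Thus $n^\flat$ and $n^\perp$ are complementary parallel distributions, and the (local) de Rham splitting produces, around $p$, a Riemannian product $U\cong(-\varepsilon,\varepsilon)\times\Sigma$ with $\metric=\di s\otimes\di s+\metric_\Sigma$, $n=\partial_s$ and $\Sigma=\{s=0\}$. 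Since $\omega$ vanishes on $n^\perp$, the quantities $|B|,\mu,\lambda_2$ and (via $\di\phi=\tfrac m{m-1}\mu\,\omega$) also $\phi$ depend only on $s$, whence $A=B+\tfrac\phi m\metric=\mu_1(s)\,\di s\otimes\di s+\mu_2(s)\,\metric_\Sigma$ with $\mu_1-\mu_2=\mu-\lambda_2=\tfrac m{m-1}\mu\neq0$ on $M_0$; a short computation using $\mu'=\mu|\omega|$ and $\phi'=\tfrac m{m-1}\mu|\omega|$ gives $\mu_2'\equiv0$, so $\mu_2$ is constant, as claimed when $m\ge3$. For $m=2$, $\sigma\equiv0$ makes $e_1,e_2$ parallel, so $U\cong(-\varepsilon,\varepsilon)_s\times\Sigma_\tau$ is a Riemannian product with $\partial_s=e_1$, $\partial_\tau=e_2$; in this parallel frame $A$ is diagonal (so are $B$ and $\metric$), $(\nabla_{\partial_i}A)(\partial_j,\partial_k)=\partial_iA_{jk}$, and Codazzi forces $\partial_sA_{\tau\tau}=\partial_\tau A_{ss}=0$, i.e. $A=\mu_1(s)\,\di s\otimes\di s+\mu_2(\tau)\,\di\tau\otimes\di\tau$ with $\mu_1-\mu_2=B_{ss}-B_{\tau\tau}=\sqrt2\,|B|\neq0$ on $M_0$. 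In summary, the only serious difficulty is the algebraic rigidity extracted in Step 2 (that $\omega^\sharp$ is an eigenvector and $B$ has exactly two eigenvalues when $m\ge3$, respectively that the eigenframe's connection form vanishes when $m=2$); the sole remaining technical wrinkle — the behaviour of $|B|$ near its zero set — is sidestepped by working throughout on $\{|B|\neq0\}$ and invoking continuity of $\nabla B$ with connectedness of $M$ only in Step 1.
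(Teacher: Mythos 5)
Your proposal is correct, and it establishes exactly the structural facts the paper proves, but by a more invariant route. The paper extracts the pointwise rigidity \eqref{rigid} from \eqref{ide_kato} through the frame computation \eqref{primaalg} with $B$ diagonalized; you instead invoke the equality case of Kato's inequality to write $\nabla B=\di\log|B|\otimes B$ on $\{|B|\neq0\}$, which is equivalent (in a diagonalizing frame the recurrence reads $b_{ijk}=\omega_k\lambda_i\delta_{ij}$, i.e.\ precisely \eqref{rigid}). The paper then feeds in the Codazzi equations through \eqref{zeroalge}--\eqref{trace_b} and the relation $\lambda_1=-(m-1)\lambda_j$, whereas you contract the totally symmetric tensor $\nabla A=\omega\otimes B+\tfrac1m\,\di(\trace A)\otimes\metric$ to obtain $\di(\trace A)=\tfrac{m}{m-1}\iota_{\omega^\sharp}B$ and $\tfrac{m-2}{m-1}\,\omega\wedge\iota_{\omega^\sharp}B=0$, so that for $m\ge3$ the direction $n=\omega^\sharp/|\omega^\sharp|$ is a simple eigendirection and $B$ acts as a multiple of the identity on $n^\perp$; from there your argument that $n$ is parallel is the invariant counterpart of the paper's vanishing of the forms $\theta^1_j$, and the local splitting and the stated form of $A$ follow as in the paper. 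Your separate $m=2$ argument (vanishing of the connection form of the eigenframe, then Codazzi in the parallel frame) correctly covers the case where the wedge identity is vacuous, while the paper treats all $m\ge2$ in a single stream; your handling of cases (i)--(ii) via the density of $\{|B|\neq0\}\cup\mathrm{int}\{B=0\}$, continuity of $\nabla B$, and the trace argument giving $\nabla A=0$ is complete (and slightly more careful than the paper's corresponding passage), and your explicit computation $\mu_2'=-\tfrac{\mu'}{m-1}+\tfrac{\phi'}{m}=0$ makes the constancy of $\mu_2$ for $m\ge3$ particularly transparent. In short: same skeleton, but your frame-free packaging of the Kato equality and the contraction identities replaces the paper's index computation, at the modest cost of a case split in the dimension.
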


\begin{proof}
	For notational convenience let $H = \mathrm{tr}A/m$. Suppose that $B \not \equiv 0$, and let $p$ be such that $|B|(p)>0$. Let let $a_{ij}, b_{ij}$ be the components of $A,B$ in a local orthonormal frame $\{e_i\}$, with $b_{ij} = a_{ij} - H \delta_{ij}$, and assume that $b$ is diagonalized at $p$ with eigenvalues $\{\lambda_i\}$. At the point $p$ we then have
	\begin{equation}\label{primaalg}
	\begin{array}{lcl}
	4|B|^2 |\nabla |B||^2 & = & |\nabla |B|^2|^2 = \disp 4 \sum_k \left( \sum_{i,j} b_{ij}b_{ijk} \right)^2 = 4 \sum_k \left( \sum_i \lambda_i b_{iik} \right)^2 \\[0.5cm]
	& = & \disp 4 \sum_k \left[ \sum_i \lambda_i^2 \sum_i b_{iik}^2 - \sum_{j \neq i} (\lambda_i b_{jjk}- \lambda_j b_{iik})^2\right] \\[0.5cm]
	& = & \disp 4|B|^2 \left[ \sum_{i,k}b_{iik}^2 - |B|^{-2} \sum_k\sum_{j \neq i} (\lambda_i b_{jjk}- \lambda_j b_{iik})^2\right].
	\end{array}
	\end{equation}
	(recall that $u = |B|^2>0$ around $p$). Therefore, at $p$
	$$
	\begin{array}{lcl}
	0 & = & \disp |\nabla B|^2 - |\nabla |B||^2 =  \sum_{i,j,k} b_{ijk}^2 - \sum_{i,k}b_{iik}^2 + |B|^{-2} \sum_k\sum_{j \neq i} (\lambda_i b_{jjk}- \lambda_j b_{iik})^2 \\[0.5cm]
	& = & \disp \sum_{i,j,k \ \text{distinct}} b_{ijk}^2 + \sum_{i \neq k} (b_{iki}^2 + b_{ikk}^2) + |B|^{-2} \sum_k\sum_{j \neq i} (\lambda_i b_{jjk}- \lambda_j b_{iik})^2.
	\end{array}
	$$
	Thus,  
	\begin{equation}\label{rigid}
	\begin{array}{lll}
	(i) & 0 = b_{ijk} = b_{iki} = b_{ikk} & \quad \text{for each  } \, i,j,k \, \text{ pairwise distinct.} \\[0.2cm]
	(ii) & \lambda_i b_{jjk}- \lambda_j b_{iik} = 0 & \quad \text{for each $k$ and each $i \neq j$.}
	\end{array}
	\end{equation}
	Since $A$ is a Codazzi tensor, $a_{ijk}= a_{ikj}$ for each $i,j,k$ and \eqref{rigid} implies
	$$
	0 = a_{ijk} = a_{iki} = a_{ikk} = a_{iik} \qquad \text{for $i,j,k$ distinct},
	$$
	and in particular from $a_{iik}=0$ we get 
	\begin{equation}\label{zeroalge}
	b_{iik} = -H_k \qquad \text{for each $k$ and each $i \neq k$.} 
	\end{equation}
	As $B$ is trace-free, 
	\begin{equation}\label{trace_b}
	b_{kkk} = - \sum_{i \neq k} b_{iik} = (m-1)H_k \qquad \text{for each } \, k. 
	\end{equation}
	
	Suppose that $\nabla |B| \equiv 0$, so by \eqref{ide_kato} $\nabla B \equiv 0$. From \eqref{trace_b} we deduce that $H$ is constant, hence also $\nabla A =0$. The constancy of the norm of $B$ implies that $B$ never vanishes on $M$, thus $(ii)$ holds.\\
	We are left to consider the case $|\nabla |B|| \neq 0$ at some point $p$. Because of \eqref{primaalg}, there exist $i,k$ such that $b_{iik} \neq 0$, and in view of \eqref{zeroalge}, \eqref{trace_b} we can assume $i=k$ and rename indices in such a way that $b_{111} = (m-1)H_1\neq 0$. Using \eqref{rigid}, for each $j \neq 1$ we infer
	$$
	-\lambda_1 H_1 = \lambda_1 b_{jj1} = \lambda_j b_{111} = (m-1)\lambda_j H_1,
	$$
	thus $\lambda_1 = -(m-1)\lambda_j$ for each $j \neq 1$. The tensor $B$ has only two eigenvalues, both nonzero since $|B|>0$ and therefore distinct. The rank theorem guarantees that the frame $\{e_i\}$ can be chosen to diagonalize $B$ in a full neighbourhood $U \subseteq M$ of $p$, hence 
	\begin{equation}\label{eq_Hj}
	\begin{array}{ll}
	- H_k = b_{jjk} = e_k(\lambda_j) \qquad \text{for } \, k \neq j, \\[0.2cm]
	(m-1)H_j = b_{jjj} = e_j(\lambda_j) 
	\end{array}
	\end{equation}
	If we denote with $\mu_j = \lambda_j + H$ the eigenvalues of $A$, from \eqref{eq_Hj} we deduce $e_k(\mu_j)=0$ for $k \neq j$, hence $\nabla \mu_j$ is a multiple of $e_j$. Next, denote with $\{\theta^i\}$ the local orthonormal coframe dual to the frame $\{e_i\}$ and by $\{\theta^i_j\}$ the corresponding Levi-Civita connection forms satisfying the structural equations
	\begin{equation}\label{str_eq}
	\begin{array}{ll}
	\di \theta^i = - \theta^i_j \wedge \theta^j & \text{for } \, 1 \leq i \leq m, \\ [0.2cm]
	\theta^i_j + \theta^j_i = 0 & \text{for } \, 1 \leq i, j \leq m;
	\end{array}
	\end{equation}
	from \eqref{rigid} we deduce, for each $k$ and each $j \neq 1$
	$$
	0 = b_{1jk} = e_k(b_{1j}) - b_{1i} \theta^i_j (e_k) - b_{ij} \theta^i_1 (e_k) = 0 - \lambda_1 \theta^1_j (e_k) - \lambda_j \theta^j_1 (e_k) = m \lambda_j \theta^1_j (e_k)
	$$
	on $U$, since $\lambda_1 = - (m - 1) \lambda_j$ and $\theta^j_1 = - \theta^1_j$ (no sum over $j$ is intended in the expression above). The eigenvalues of $B$ are nonzero, so $\theta^1_j \equiv 0$ for each $j$. It follows from the first of \eqref{str_eq} that $\di \theta^1 = 0$ and around $p$ there exists $s$ such that $\theta^1= \di s$. Again the structure equations imply $\mathcal{L}_{\nabla s} \metric = 0$ so $\nabla s$ is Killing, hence parallel. Therefore, a neighbourhood $U$ of $p$ splits as $(-\ep, \ep) \times \Sigma$ with the product metric. Eventually, since $\nabla \mu_j$ only depends on $e_j$ and $A$ is diagonalized by $\{e_j\}$ on $U$, $A$ too has the form indicated in $(iii)$, with $\mu_2$ constant if $\Sigma$ has dimension at least $2$.
\end{proof}

We are ready to prove Theorem \ref{teo_umbilico_intro}.

\begin{theorem} \label{stab:thm:umbilicity}
	Let $\psi:M^m\to\Mbar^{m+1}=I\times_h\PP$ be a connected, complete, stable mean curvature flow soliton with respect to $X=h(t)\partial_t$ with soliton constant $c$. Assume that $\Mbar$ is complete and has constant sectional curvature $\bar\kappa$, with
	\begin{equation} \label{stab:ch'bound}
	ch'(\pi_I\circ\psi)\leq m\bar\kappa \quad \text{on } \, M.
	\end{equation}
	Let $\Phi=\II-\metric_M\otimes\Hvec$ be the umbilicity tensor of $\psi$ and suppose that
	\begin{equation} \label{stab:PhiL2}
	|\Phi|\in L^2(M,e^{c\eta})
	\end{equation}
	with $\eta$ defined as above. Then one of the following cases occurs:
	\begin{enumerate}
		\item[(i)] $\psi$ is totally geodesic (and if $c \neq 0$ then $\psi(M)$ is invariant by the flow of $X$), or
		\item[(ii)] $I = \RR$, $h$ is constant on $\RR$, $\overline{M}$ is isometric to the product $\RR \times F$ with $F$ a complete flat manifold and $M$ is also flat. By introducing the universal coverings $\pi_M : \RR^m \to M$, $\pi_F : \RR^m \to F$ and $\pi_{\Mbar} = \mathrm{id}_{\RR} \times \pi_F : \RR^{m+1} \to \Mbar$, the map $\psi$ lifts to an immersion $\hat \psi : \RR^m \to \RR \times \RR^m$ satisfying $\pi_{\Mbar} \circ \hat\psi = \psi \circ \pi_M$, which up to an isometry of $\RR^m$ and a translation along the $\RR$ factor of $\RR^{m+1}$ is given by
		$$
		\hat\psi : \RR^m \to \RR^{m+1}, \quad (x^1,x^2,\dots,x^m) \mapsto (\sigma_1(x^1),\sigma_2(x^1),x^2,\dots,x^m)
		$$
		where $\gamma = (\sigma_1,\sigma_2) : \RR \to \RR^2$ is the grim reaper curve with image
		$$
		\sigma(\RR) = \left\{ (x,y) \in \RR^2 : x = - \frac{1}{ch_0} \log( \cos (ch_0 y)), |y| < \frac{2}{\pi |c| h_0} \right\}
		$$
		and $h_0$ is the constant value of $h$ on $\RR$. Furthermore, there exists a Riemannian submersion $\pi_\Omega : M \to \Omega$ onto a compact, flat manifold $\Omega$ with $1$-dimensional, noncompact geodesic fibers of the type $\pi_M(\RR \times \{ (x^2,\dots,x^m) \} )$, for constant $(x^2,\dots,x^m) \in \RR^{m-1}$. Such fiber is mapped by $\psi$ into the grim reaper curve $\pi_{\Mbar}(\sigma(\RR) \times \{ (x^2,\dots,x^m) \})$.
	\end{enumerate}
Furthermore, any of the solitons in (ii) is stable, while a soliton in (i) is stable if and only if $L = \Delta_{-c\eta} + (m\bar\kappa-ch')$ is non-negative.
\end{theorem}

\begin{remark}\label{rem_nice}
		The completeness assumption on $\overline{M}= I \times_h \PP$ forces $I = \RR$. However, completeness can be weakened to the combination of the following two requirements:
		\begin{itemize}
			\item[-] $\PP$ is complete;
			\item[-] either $I = \RR$ or $h$ extends continuously to $\partial I \subset \RR$ with value $0$. 
		\end{itemize}
	\end{remark}

\begin{proof}
We split the rather long proof into several steps. \\[0.3cm] 
\textbf{Step 1:} the function $u = |\Phi|^2$ either vanishes identically, or it is everywhere positive and satisfies
	\begin{equation} \label{stab:Phieq3}
	u\Delta_{-c\eta}u+2(|\II|^2+m\bar\kappa-ch'(\pi_I\circ\psi))u^2=\frac{1}{2}|\nabla u|^2.
	\end{equation}

\noindent \emph{Proof of Step 1.}\\
Having fixed a local unit normal $\nu$ and set $H=\langle\Hvec,\nu\rangle$ we clearly have
	\[
	|\Phi|^2=|\II|^2-mH^2\geq0.
	\]
	Furthermore,
	\begin{equation} \label{stab:nablaPhi}
	|\nabla\Phi|^2=|\nabla \II|^2-m|\nabla H|^2.
	\end{equation}
	We recall, see for instance equation (9.37) in \cite{AdeLR}, that in the present setting we have the validity of the Simons' type formula
	\begin{equation} \label{stab:|A|eq}
	\frac{1}{2}\Delta_{-c\eta}|\II|^2=-(ch'(\pi_I\circ\psi)+|\II|^2)|\II|^2+m\bar\kappa|\Phi|^2+|\nabla \II|^2.
	\end{equation}
	On the other hand, from equation (\ref{stab:Hsquareequationsimpl}) valid in the present assumptions, we have
	\begin{equation} \label{stab:Heq}
	\frac{1}{2}\Delta_{-c\eta}H^2=-(ch'(\pi_I\circ\psi)+|\II|^2)H^2+|\nabla H|^2.
	\end{equation}
	Putting together (\ref{stab:|A|eq}) and (\ref{stab:Heq}) and using (\ref{stab:nablaPhi}) we obtain
	\begin{equation} \label{stab:Phieq}
	\frac{1}{2}\Delta_{-c\eta}|\Phi|^2+(ch'(\pi_I\circ\psi)+|\II|^2-m\bar\kappa)|\Phi|^2-|\nabla\Phi|^2=0.
	\end{equation}
	The function $u=|\Phi|^2$ therefore solves
	\begin{equation} \label{stab:Phieq2}
	u\Delta_{-c\eta}u+2(|\II|^2+m\bar\kappa-ch'(\pi_I\circ\psi))u^2=2|\nabla\Phi|^2u+4(m\bar\kappa-ch'(\pi_I\circ\psi))u^2.
	\end{equation}
	Now by  (\ref{stab:ch'bound}) we deduce
	\[
	m\bar\kappa - ch' \geq 0 \quad \text{on } M,
	\]
	while from Kato's inequality $|\nabla |\Phi||^2 \le |\nabla \Phi|^2$ we get
	\begin{equation}\label{kato}
	2u|\nabla\Phi|^2 \geq \frac{1}{2}|\nabla u|^2.
	\end{equation}
	Substituting in the above we eventually have
	\begin{equation}
	u\Delta_{-c\eta}u+2(|\II|^2+m\bar\kappa-ch'(\pi_I\circ\psi))u^2 \geq \frac{1}{2}|\nabla u|^2.
	\end{equation}
	The stability of the soliton implies the validity of (\ref{stab:positivesolution2}) for some positive smooth function $v$ on $M$. We now apply Theorem \ref{stab:thm:PRS} with the choices
	\[
	a(x) = 2(|\II|^2+m\bar\kappa-ch'(\pi_I\circ\psi))(x), \quad f=-c\eta, \quad \mu=\frac{1}{2}, \quad A=-\frac{1}{2}, \quad K=0
	\]
	so that the requirements in formula (\ref{stab:bigthmparam}) are satisfied. In case $M$ is non-compact, by choosing the admissible $\beta=-\frac{1}{2}$ we see that (\ref{stab:PhiL2}) implies
	\[
	\left(\int_{\partial B_r}u e^{c\eta}\right)^{-1}\not\in L^1(+\infty),
	\]
	that corresponds to (\ref{stab:bigthmint}) for the choice $p=2$. Applying Theorem \ref{stab:thm:PRS} we deduce that either $u\equiv0$ (so $\psi:M\to\Mbar$ is totally umbilical) or $u>0$ and $u^{\frac{1}{2}}$ satisfies the equation
	\[
	\Delta_{-c\eta}u^{1/2}+(|\II|^2+m\bar\kappa-ch'(\pi_I\circ\psi))u^{1/2}=0
	\]
	or equivalently
	\begin{equation} \label{stab:Phieq3}
	u\Delta_{-c\eta}u+2(|\II|^2+m\bar\kappa-ch'(\pi_I\circ\psi))u^2=\frac{1}{2}|\nabla u|^2.
	\end{equation}
This proves Step 1. \\[0.3cm]
\textbf{Step 2:} if $u \equiv 0$, then $\psi$ is totally geodesic and, if $c \neq 0$, $X$ is tangent to $\psi(M)$. The stability of $\psi$ is equivalent to the non-negativity of $L = \Delta_{-c\eta} + (m\bar\kappa-ch')$. \\[0.2cm]
\noindent \emph{Proof of Step 2.}\\
$\Mbar$ is a space of constant curvature, so the tensor field $\II$ is Codazzi. Fixing a local orthonormal coframe $\{\theta^i\}_i$ on $M$ we write $\II = a_{ij} \theta^i \otimes \theta^j \otimes \nu$, $\nabla \II = a_{ijk} \theta^k \otimes \theta^i \otimes \theta^j \otimes \nu$, $dH = H_k \theta^k$. Since $u \equiv 0$, the umbilicity tensor $\Phi = \II - \metric \otimes \Hvec$ vanishes, so we have $a_{ij} = H \delta_{ij}$ and $a_{ijk} = \delta_{ij} H_k$ by parallelism of the metric. For $1 \leq k \leq m$ and for any index $t \neq k$ we have $H_k = \delta_{tt} H_k = a_{ttk} = a_{tkt} = \delta_{tk}H_t = 0$, where $a_{ttk} = a_{tkt}$ holds true as $\II$ is Codazzi. It follows that $dH \equiv 0$, therefore $H$ is constant and so is $|\II|^2 = mH^2$. Plugging this into \eqref{stab:Heq} we get
	$$
	H^2 (ch'(\pi_I \circ \psi) + mH^2) \equiv 0 \quad \text{on } \, M.
	$$
	In particular, either $H \equiv 0$, and $\psi$ is totally geodesic, or $H \neq 0$, $ch'(\pi_I \circ \psi) \neq 0$, $ch'(\pi_I \circ \psi) = - mH^2$ on $M$. We now prove that the second case cannot occur.
	
	Suppose, by contradiction, that $u \equiv 0$, $H \neq 0$ and that $\pi_I \circ \psi$ is constant: then $\psi(M)$ is a slice $\{t_0\} \times \PP$ for some $t_0 \in I$ such that $ch'(t_0) = - mH^2 = - m \frac{h'(t_0)^2}{h(t_0)^2}$ and the stability operator of $\psi$ is
	\begin{equation}\label{eq_stab_op_pf}
	\begin{array}{lcl}
	\disp \Delta_{-c\eta} + ( |\II|^2 + m \bar\kappa - ch'(t_0) ) & = & \disp \Delta + m \left( H^2 + \frac{\kappa - h'(t_0)^2}{h(t_0)^2} + H^2 \right) \\[0.4cm]
	& = & \disp \Delta + m \, \frac{\kappa + h'(t_0)^2}{h(t_0)^2},
	\end{array}
	\end{equation}
	where $\Delta$ is the Laplace-Beltrami operator of $(\PP, h(t_0)^2 \metric_{\PP})$ and we have used the identity \eqref{stab:constcurv}. Condition \eqref{stab:ch'bound} now reads as
	$$
	c h'(t_0) = - m \, \frac{h'(t_0)^2}{h(t_0)^2} \leq m \bar\kappa = m \, \frac{\kappa}{h(t_0)^2} - m \, \frac{h'(t_0)^2}{h(t_0)^2},
	$$
	that is $\kappa \geq 0$. Since $(\PP, h(t_0)^2 \metric_{\PP})$ has constant sectional curvature $\frac{\kappa}{h(t_0)^2} \ge 0$, the bottom of the spectrum of $-\Delta$ is zero (cf. \cite{wang}). It follows that the stability operator \eqref{eq_stab_op_pf} is non-negative if and only if $\kappa = 0$ and $h'(t_0) = 0$. But $h'(t_0) = 0$ implies that $H^2 = \frac{h'(t_0)^2}{h(t_0)^2} = 0$, contradicting the assumption that $H \neq 0$.
	
	Suppose, by contradiction again, that $u \equiv 0$, $H \neq 0$ and that $\pi_I \circ \psi$ is not constant on $M$. Then $h'$ is constant on the nondegenerate interval $(\pi_I \circ \psi)(M) \subseteq I$ and by \eqref{stab:constcurv} this forces $\bar\kappa = 0$. From $|\II|^2 = mH^2$ and $ch'(\pi_I \circ \psi) = - mH^2$, the stability operator becomes 
	\[
		L = \disp \Delta_{-c\eta} + ( |\II|^2 + m \bar\kappa - ch'(\pi_I \circ \psi) ) = \disp \Delta_{-c\eta} + 2 mH^2.
	\]	
	The Gauss equation and the fact that $\psi$ is totally umbilic imply that $M$ has constant sectional curvature $H^2>0$, whence it is compact. Therefore, the first eigenvalue of $L$	is $-2mH^2<0$, contradiction. So far, we have proved that if $u \equiv 0$ then $\psi$ must be totally geodesic. The stability operator reduces to $\Delta_{-c\eta} + (m\bar\kappa-ch')$. If $c \neq 0$, the fact that $X$ is tangent to $\psi(M)$ follows by the soliton equation, which concludes the proof of Step 2.\\[0.3cm]
%
\textbf{Step 3:} if $u>0$ solves \eqref{stab:Phieq3}, then $I = \RR$, $\Mbar$ is flat, $X = h_0 \partial_t$ for some constant $h_0>0$, and $u$ is not constant on any open subset of $M$. \\[0.2cm] 
\noindent \emph{Proof of Step 3.} From \eqref{stab:Phieq2} and \eqref{stab:Phieq3} we have
	\[
	\frac{1}{2}|\nabla|\Phi|^2|^2 = 2u|\nabla\Phi|^2 + 4(m\bar\kappa-ch')u^2.
	\]
	Since $m\bar\kappa-ch'\geq0$ and $u>0$, from the above and Kato's inequality \eqref{kato} we deduce
	\begin{equation}\label{condisplit}
	m\bar\kappa\equiv ch'(\pi_I\circ\psi), \qquad |\nabla \Phi|^2 = |\nabla |\Phi||^2 \quad \text{on } M.
	\end{equation}
Note that $u$ cannot be constant on an open set of $M$, since otherwise equation \eqref{stab:Phieq3} would reduce to $0 = 2 |\II|^2 u^2$, which is absurd since $|\II|^2 \geq u > 0$.

	We prove that $\Mbar$ is flat. Indeed, if $c = 0$ then $\bar\kappa = 0$ by \eqref{condisplit}; if $c \neq 0$, then again by \eqref{condisplit} and since $\psi(M)$ is not a slice (as slices are totally umbilical), we see that $h'$ is constant on the nondegenerate interval $(\pi_I \circ \psi)(M) \subseteq I$, so $\bar\kappa = - h''/h \equiv 0$ by \eqref{stab:constcurv}. Inserting this into \eqref{condisplit} we see that $h' \equiv 0$, so $h$ is constant and the completeness of $\overline{M}$ or Remark \ref{rem_nice} imply that $I = \RR$. Moreover, $\kappa = 0$ by \eqref{stab:constcurv} and we conclude that $\PP$ is flat and $X = h_0 \partial_t$ for some $h_0 > 0$. This concludes the proof of Step 3.\\[0.3cm]
\textbf{Step 4:} if $u>0$ solves \eqref{stab:Phieq3}, then $M$ is isometric to a cylinder $\RR \times \Sigma$ for some complete $\Sigma$, with metric and second fundamental form given by
	\begin{equation}\label{locexp_goal}
		\metric = \di s \otimes \di s + \metric_{\Sigma}, \qquad \II = \mu_1(s) \, \di s \otimes \di s \otimes \nu,
	\end{equation}
where $\nu$ is a unit normal vector to $M \to \Mbar$. Moreover, $\mu_1 \neq 0$ on $\RR$, and the soliton constant satisfies $c \neq 0$.\\[0.2cm] 	
\noindent \emph{Proof of Step 4.} By Step 3, we know that $u$ cannot be constant on any open set of $M$, thus the set $\{\nabla u \neq 0 \}$ is nonempty and dense in $M$. Fix a point $p \in M$ such that $\nabla u (p) \neq 0$. The tensor field $\II$ is Codazzi, as observed in Step 2, and its traceless part $\Phi$ attains the equality in Kato's inequality by \eqref{condisplit}. By applying Lemma \ref{lem_kato} with $A = \II$, we obtain the existence of a neighbourhood $U$ of $p$ that splits as a Riemannian product $(-\varepsilon,\varepsilon) \times \Sigma^{m-1}$ and such that the metric $\metric$ of $M$ and the tensor field $\II$ can be written as
	\begin{equation} \label{locexp}
		\metric = \di s \otimes \di s + \metric_{\Sigma}, \qquad \II = \Big( \mu_1(s) \, \di s \otimes \di s + \mu_2(\pi_{\Sigma}) \metric_{\Sigma} \Big) \otimes \nu,
	\end{equation}
	for some smooth functions $\mu_1 : (-\varepsilon, \varepsilon) \to \RR$, $\mu_2 : \Sigma \to \RR$ satisfying
	\begin{equation} \label{noumb}
		\mu_1(s) \neq \mu_2(x) \quad \text{for each } \, s \in (-\varepsilon, \varepsilon), \, x \in \Sigma.
	\end{equation}
	Up to reparametrizing $(-\varepsilon,\varepsilon)$, we can write
	\begin{equation} \label{pexp}
		p = (0,q) \qquad \text{for some } \, q \in \Sigma.
	\end{equation}

	Now we prove that $\mu_1(s)\mu_2(\pi_{\Sigma}) \equiv 0$ on $U$. Let $\{\theta^i\}$ be a local orthonormal coframe on $U$ as the one described in the last part of the proof of Lemma \ref{lem_kato}. In particular, we assume that $\theta^1 = ds$ and then we have $\theta^1_j \equiv 0$ on $U$ for $1 \leq j \leq m$. Writing
	$$
	\II = a_{ij} \theta^i \otimes \theta^j \otimes \nu,
	$$
	we have
	$$
	a_{11} = \mu_1, \qquad a_{ii} = \mu_2 \quad \text{for } \, 2 \leq i \leq m \qquad \text{and} \qquad a_{ij} = 0 \quad \text{for each } \, i \neq j.
	$$
	On the other hand, since $\Mbar$ is flat, Gauss' equations give
	\begin{equation} \label{gauss}
	R_{ijkt} = a_{ik} a_{jt} - a_{it} a_{jk} \qquad \text{for } \, 1 \leq i, j, k, t \leq m
	\end{equation}
	where $R_{ijkt}$ are the components of the Riemann curvature tensor of $M$ along $\{\theta^i\}$. Recalling that $\theta^1_j \equiv 0$ for $1 \leq j \leq m$, by Cartan structural equations
	$$
	\frac{1}{2} R_{ijkt} \theta^k \wedge \theta^t = \di \theta^i_j + \theta^i_k \wedge \theta^k_j \qquad \text{for } \, 1 \leq i, j \leq m
	$$
	we immediately see that
	$$
	R_{1jkt} = 0 \qquad \text{for } \, 1 \leq j, k, t \leq m.
	$$
	Putting together these facts, from \eqref{gauss} we obtain
	$$
	\mu_1(s) \mu_2(\pi_{\Sigma}) = a_{11} a_{22} = a_{11} a_{22} - a_{12} a_{12} = R_{1212} = 0.
	$$
	
	Note that $\mu_1$ and $\mu_2$ can never be both zero at the same point by \eqref{noumb}. Since they depend on disjoint sets of variables, this implies that exactly one of them identically vanishes on its domain while the other one never attains the zero value. In the $2$-dimensional case where $m=2$ we can assume without loss of generality that $\Sigma$ is an interval and then $\mu_2 \equiv 0$, up to renaming indices. We claim that $\mu_2$ identically vanishes on $\Sigma$ also in case $m \geq 3$. Suppose, by contradiction, that $\mu_2 \neq 0$. Then $\mu_1 \equiv 0$ on $(-\varepsilon,\varepsilon)$ while $\mu_2$ has constant (by Lemma \ref{lem_kato}) nonzero value. By \eqref{locexp}, $\psi$ has constant mean curvature $H = \frac{m-1}{m}\mu_2$ in $U$. Putting this constant value of $H$ into equation \eqref{stab:Heq} we obtain $0 = |\II|^2 H^2$, contradiction. So, we have proved that $\mu_2 \equiv 0$ on $\Sigma$. Moreover, the mean curvature of $\psi$ is given by $H = \frac{1}{m}\mu_1(s) \neq 0$ on $U$, so $\psi$ is not a minimal hypersurface and therefore $c \neq 0$. Also, the rank of $\II$ is exactly $1$ in a neighbourhood of $p$. Since $u>0$, we observe that $M$ does not possess totally geodesic points, hence the rank of $\II$ it at least $1$ everywhere. Since the rank of $\II$ is lower-semicontinuous and it is $1$ on the dense subset $\{\nabla u \neq 0\}$, we deduce that $\II$ has rank one everywhere. In particular, the distribution corresponding to the nullity of $\II$ is smooth, totally geodesic and integrable (cf. \cite[Proposition 1.18]{DaTo}). Hence, the entire $M$ splits as $\RR \times \Sigma$ for some complete $\Sigma$, and the metric and second fundamental form write as claimed on the entire $M$. \\[0.3cm]
\textbf{Step 5:} if $u>0$ solves \eqref{stab:Phieq3}, then $\psi$ lifts to a (possibly tilted) grim reaper immersion $\hat \psi : \RR^m \to \RR^{m+1}$. \\[0.2cm]
\emph{Proof of Step 5.} By Step 3 and by completeness of $\Mbar$, $I \times_h \PP = \RR \times F$, with $(F, \metric_F) = (\PP, h_0^2\metric_{\PP})$ a complete flat manifold, and $\psi$ is a translating soliton with respect to the parallel vector field $\partial_t$ with soliton constant $c h_0$, which is non-zero by Step 4. Without loss of generality, we can therefore assume $c > 0$.
	
	Let $\pi : \RR^m \to F$ be the universal Riemannian covering of $F$. Then
	$$
	\pi_{\Mbar} = \mathrm{id}_{\RR} \times \pi : \RR \times \RR^m \to \RR \times F = \Mbar
	$$
	is the universal Riemannian covering of $\Mbar$. The deck transformation group of the covering $\pi$ is a discrete subgroup $\Gamma_F$ of the isometries of $\RR^m$, and $F = \RR^m/\Gamma_F$, while the deck transformation group of the covering $\pi_{\Mbar}$ consists of the maps of the form $\mathrm{id}_{\RR} \times T : \RR^{m+1} \to \RR^{m+1}$, with $T \in \Gamma_F$. Consider the immersion $\tilde \psi = \psi \circ \pi_M : \RR^m \to \Mbar$. Then, $\tilde \psi$ uniquely lifts to an immersion
	$$
	\hat\psi : \RR^m \to \RR^{m+1} \quad \text{such that} \quad \pi_{\Mbar} \circ \hat \psi = \tilde\psi.
	$$
It is easy to see that $\hat\psi$ is again a translating mean curvature flow soliton with respect to the lift $\hat\partial_t \in \mathfrak X(\RR^{m+1})$ of $\partial_t$ with soliton constant $ch_0$. Furthermore, by Step 4 the universal covering of $M$ splits as $\RR^m = \RR \times \RR^{m-1}$, where $\RR^{m-1}$ covers $\Sigma$ and the metric and second fundamental form on $\RR^m$ have the expression
	\[
		\metric = \di s \otimes \di s + \metric_{\RR^{m-1}}, \qquad \hat \II = \mu_1(s) \, \di s \otimes \di s \otimes \hat\nu,
	\]
	with $\hat\nu$ the local normal vector field along $\hat\psi$ given by the lift of $\nu$. In particular, $\Sigma$ is totally geodesic in $\RR^m$.

	From the expression of metric and second fundamental form of $M$, and from Gauss equations, we deduce that $M$ is flat. Since the nullity of $\hat\II$ has dimension $m-1$, a classical theorem of Hartman \cite{hartman} guarantees that $\hat\psi$ is a flat cylinder over a plane curve. More precisely, for each $q \in \RR^{m-1}$ we define 
	$$
	\gamma_q : \RR \times \RR^m, \qquad  s \mapsto (s,q).
	$$
Then, $\hat\psi(\gamma_q)$ is contained in the $2$-plane $\Pi_q = (\hat\psi_{\ast}T_q \RR^{m-1})^{\bot} \subseteq T_{\hat \psi(0,q)}\RR^{m+1}$, and the planes $\Pi_q$ are all parallel. We denote with $\Pi$ the plane associated to $q=0$. As observed in the proof of Theorem \ref{B:thm:euclidtrans}, $\gamma_0$ is itself a translating soliton with soliton constant $ch_0$ in $\Pi$, with respect to the orthogonal projection of the vector field $\hat\partial_t$ onto $\Pi$. We let $V$ denote such orthogonal projection and define 
	\begin{equation}\label{def_alpha_GR}
	\alpha\in[0,\pi/2) \qquad \text{such that } \, ||V|| = \cos\alpha. 	
	\end{equation}
In fact, $V$ is a nonzero vector, since otherwise $\hat \psi(\gamma_0)$ would be a straight line and $\psi$ would be totally geodesic. Then, $\hat \psi(\gamma_0)$ is a translating soliton curve with soliton constant
	\begin{equation}\label{def_k_gr}
		k = ch_0\cos\alpha > 0 
	\end{equation}
with respect to a parallel unit vector field in the Euclidean plane and therefore, under a suitable choice of cartesian coordinates $(x^1,x^2)$ on $\Pi$ such that $V = \cos\alpha \, \partial_1$, it can be reparametrized as the grim reaper curve
	\begin{equation}\label{eq_GR}
	\begin{array}{rcrcl}
	\sigma & : & \left( -\frac{\pi}{2 k}, \frac{\pi}{2k} \right) & \rightarrow & \RR^2 \\ [0.2cm]
	& & \tau & \mapsto & \sigma(\tau) = (\sigma_1(\tau), \sigma_2(\tau)) = \left( -\frac{1}{k}\log(\cos(k\tau)), \tau \right).
	\end{array}
	\end{equation}
	Indeed, a translating soliton curve with respect to $\partial_1$ in $\RR^2$ with soliton constant $k$ can always be locally written as a graph $x_1 = f(x_2)$ with $f$ satisfying
	$$
	k = \sqrt{1+(f')^2}\left(\frac{f'}{\sqrt{1+(f')^2}}\right)' = \frac{f''}{1+(f')^2} = (\arctan(f'))'.
	$$
	Hereafter, the point $\sigma(0)$ of the grim reaper curve parametrized as above will be referred to as the vertex of the grim reaper. So, $\hat\psi$ is a (possibly tilted) grim reaper cylinder, and up to an isometry of the second factor of the ambient space $\RR^{m+1}= \RR \times \RR^m$ and a translation in the first factor (which does not affect the validity of \eqref{stab:PhiL2}) we can assume 
	\begin{equation}\label{eq_hatpsi}
	\begin{array}{l}
		\hat \psi(s, x_2, x_3, \ldots, x_m) \\[0.2cm]
		\qquad \disp = \left( x_2\sin \alpha + \sigma_1(\tau(s))\cos \alpha, \sigma_2(\tau(s)), x_2\cos \alpha - \sigma_1(\tau(s))\sin \alpha, x_3, \ldots, x_m \right),
	\end{array}
	\end{equation} 
	where $\tau(s)$ is the change of parameter from arclength $s$ to $\tau$ in \eqref{eq_GR}. To introduce Step 6, let $\Omega_0 = \{0\} \times \RR^{m-1} \subset \RR^m$ be the ``valley" of the grim reaper, that is mapped by $\hat\psi$ to the vertices of the grim reaper curves $\hat \psi(\gamma_q)$, $q \in \RR^{m-1}$. Also, let $\Omega = \pi_M(\Omega_0)$. \\[0.3cm] 
\textbf{Step 6:} the grim reaper $\hat \psi$ is not tilted (that is, the angle $\alpha$ defined in \eqref{def_alpha_GR} vanishes).\\[0.2cm]
\noindent \emph{Proof of Step 6.} Suppose, by contradiction, that $\alpha \neq 0$. Let $\hat W$ be the orthogonal projection of $\hat\partial_t$ onto the subspace $\hat\psi_\ast T\Omega_0$ of $T\RR^{m+1}$, so $\hat W \neq 0$, and let $W \in T\Omega_0$ be the (never vanishing) induced vector field on $\Omega_0$. Explicitely, we have
	\begin{align*}
		\hat W & \equiv (\sin^2\alpha,0,\sin\alpha\cos\alpha,0,\dots,0) \in \RR^{m+1} \, , \\
		W & \equiv (0,\sin\alpha,0,\dots,0) \in \RR^m \, .
\end{align*}
As $W $ is parallel on $\Omega_0$, it allows to split $\Omega_0$ as the product $\Sigma_0^{m-2} \times \RR$, where the tangent vector to the $\RR$ direction is $W$. Let $\ell_W \subset \Omega_0$ be a line in the universal covering $\RR \times \Sigma_0 \times \RR$ of $M$ of the form $\ell_W(t) = (0,z,t)$ for $t \in \RR$ and fixed $z \in \Sigma_0$. Next, observe that $\hat W$ is not tangent to the ``horizontal" factor $\RR^m$ of $\RR^{m+1} = \RR \times \RR^m$, so for any deck transformation $\hat T = \mathrm{id}_\RR \times T \in \mathrm{deck}(\pi_{\overline{M}})$, condition
$\hat T(\hat \psi(\RR^m)) = \hat \psi(\RR^m)$ forces the map $T$ to act as the identity in the direction given by the projection of $\hat W$ on the horizontal $\RR^m$. In particular, this implies the following properties:
\begin{equation}\label{eq_prope_W}
\begin{array}{ll}
(i) & \text{there exists no nontrivial deck transformation of $\pi_{\Mbar}$ fixing $\hat\psi(\ell_W)$;} \\[0.2cm]
(ii) & (\pi_{\overline{M}})_* \hat W = \hat W; \\[0.2cm] 
(iii) & \text{$\pi_{\Mbar}$ is injective on $\hat \psi(\ell_W)$.}
\end{array} 
\end{equation}
From $(iii)$, $\psi\circ\pi_M = \pi_{\Mbar} \circ \hat \psi$ is injective on $\ell_W$, since $\hat\psi$ is injective. As a consequence, $\pi_M$ is injective on $\ell_W$ and $\psi$ is injective on $\pi_M(\ell_W)$. Note also that $\psi\circ\pi_M(\ell_W) = \pi_{\Mbar} \circ \hat \psi (\ell_W)$ is a proper curve in $\overline{M}$, and thus the curve $\pi_M(\ell_W)$ is proper in $M$ (and contained into $\Omega$). Indeed, for each compact set $K \subset M$, 
	\[
	(\pi_M \circ \ell_W)^{-1}(K) \subset (\psi \circ \pi_M \circ \ell_W)^{-1}(\psi(K)) 
	\]
is therefore closed in a compact set, hence it is compact. Summarizing, $\pi_M(\ell_W)$ is a proper and injective immersion, hence a proper embedding. Next, observe that if two lines $\ell_W,\ell'_W$ are different (hence, they do not intersect), then either $\pi_M(\ell_W) \cap \pi_M(\ell'_W) = \emptyset$ or $\pi_M(\ell_W) = \pi_M(\ell'_W)$. Indeed, if $\pi_M(\ell_W)$ and $\pi_M(\ell'_W)$ intersect but do not coincide, since they are geodesics in $M$ they have to intersect transversely. However, the two curves $\hat \psi(\ell_W)$ and $\hat \psi(\ell'_W)$ are parallel straight lines on $\RR^{m+1} = \RR \times \RR^m$ in the direction of $\hat W$, hence by $(ii)$ in \eqref{eq_prope_W} their projections cannot be transverse. 

Set $\Sigma = \pi_M(\Sigma_0) \subset \Omega$, and take a small, contractible and relatively compact open subset $\Sigma'$ of $\Sigma$. By a compactness argument that uses the properness of $\pi_M(\ell_W)$, up to reducing $\Sigma'$ we can guarantee that each $\pi_M(\ell_W)$ meets $\Sigma'$ at most once. Summarizing the above properties, since $\pi_M(\ell_W)$ is a geodesic, the exponential map $\exp^\perp : T\Sigma'^\perp \to \Omega$ realizes a diffeomorphism between $T\Sigma'^\perp$ and the union $E \subset \Omega_0$ of all curves $\pi_M(\ell_W)$ passing through $\Sigma'$. Note that $T\Sigma'^\perp$ is diffeomorphic to $\Sigma' \times \RR$ since $\Sigma'$ is contractible. From $\Omega_0 = \Sigma_0 \times \RR$ and the constructions of $\ell_W$ and $\Sigma'$, we deduce that the pulled-back metric via $\exp^\perp$ on $T\Sigma'^\perp$ is the product metric, whence $E$ is isometric to $\Sigma' \times \RR$. In conclusion, $M$ contains a subset $M_0$ of positive $m$-dimensional measure that splits as $\RR \times E = \RR \times \RR \times \Sigma'$. Fix $U\subseteq\Sigma_0$ an open subset such that $\pi_{M|U} : U \to \Sigma'$ is a diffeomorphism. Then
	$$
	\pi_{M|\RR^2\times U} : \RR \times \RR \times U \to M_0
	$$
	is an isometry, and from 
	\begin{equation}\label{eq_perinte}
	\begin{array}{rcl}
	\etabar(\pi_I) & = & \disp h_0(\pi_I-t_0) \quad \text{for some } \, t_0 \in \RR, \\[0.3cm]
	|\Phi|^2 & = & \disp |\II|^2 - 2H^2 = (\mu_1)^2 - 2\left(\frac{\mu_1}{2}\right)^2 = \frac{(\mu_1)^2}{2} = \frac{1}{2}\left(\frac{k}{||\dot\sigma||}\right)^2, \\[0.3cm]
	||\dot\sigma(\tau)||^2 & = & \disp 1 + \tan^2(k\tau) = \frac{1}{\cos^2(k\tau)}.
	\end{array}
	\end{equation}
we can estimate
	\begin{align*}
			\int_M |\Phi|^2 e^{c\eta} \geq \int_{M_0} |\Phi|^2 e^{c\eta} & = |\Sigma'| \int_{\RR^2} \frac{\mu_1(s)^2}{2} e^{ch_0(x_2\sin\alpha + \sigma_1(\tau(s))\cos\alpha - t_0)} \, \di s \, \di x_2 \\
			& = |\Sigma'| \frac{k\pi e^{-ch_0t_0}}{2}  \int_{\RR} e^{(ch_0\sin\alpha) x_2} \, \di x_2 = +\infty
		\end{align*}
		where $|\Sigma'|$ is the $(m-2)$-dimensional volume of $\Sigma'$ and we have used \eqref{def_k_gr} and 
		\begin{equation}\label{eq_useful_mu}
			\begin{array}{lcl}
				\disp \int_{\RR} \mu_1(s)^2 e^{k\sigma_1(\tau(s))} & = & \disp k^2 \int_{\RR} \cos^2(k\tau(s)) e^{-\log(\cos(k\tau(s)))} \, \di s = k^2 \int_{\RR} \cos(k\tau(s)) \, \di s \\[0.4cm]
				& = & \disp k^2 \int_{\RR} \frac{\di s}{\|\dot\sigma(\tau(s))\|} = k^2 \int_{\RR} \tau'(s) \, \di s = k^2 \int_{-\pi/(2k)}^{\pi/(2k)} \di \tau = k\pi \, .
			\end{array}
	\end{equation}
	So, $|\Phi| \not\in L^2(M,e^{c\eta})$, contradicting \eqref{stab:PhiL2}.\\[0.3cm] 
\textbf{Step 7:} if $u>0$ solves \eqref{stab:Phieq3}, then there exists a Riemannian submersion $\pi_\Omega : M \to \Omega$ with $1$-dimensional, noncompact, totally geodesic fibers that are sent, via $\hat \psi$, to the grim reaper curves \eqref{eq_GR}. \\[0.2cm]
\noindent \emph{Proof of Step 7.} Having shown that $\alpha = 0$, $\hat \psi$ writes as 
	\begin{equation}\label{eq_hatpsi_2}
	\hat \psi(s, x_2, x_3, \ldots, x_m) = \left( \sigma_1(\tau(s)), \sigma_2(\tau(s)), x_2, x_3, \ldots, x_m \right).
	\end{equation} 
We first describe the structure of translations $\hat T = \mathrm{id}_{\RR} \times T \in \mathrm{deck}(\pi_{\overline{M}})$. Choose Cartesian coordinates $(t,y_1,y_2,\ldots, y_m)= (t,y_1,y')$ on $\RR^{m+1}$, so by \eqref{eq_hatpsi_2} the image of $\hat \psi$ can be written as $t = \sigma_1(\tau(s))$, $y_1=\sigma_2(\tau(s))$. To preserve $\hat \psi(\RR^m)$, the component $T$ of $\hat T$ shall satisfy $T(y_1,y') = (\pm  y_1, T'(y_1,y'))$, since the image of lines obtained by fixing $y_1$ have bounded $y_1$ coordinate and since $y_1(\hat \psi(\RR^m))$ is invariant by $\hat T$. As $T$ is an isometry, $T'$ only depends on $y'$. In particular, either $T(\hat \psi(\gamma_q)) = \hat \psi(\gamma_{q'})$ or $T(\hat \psi(\gamma_q)) \cap \hat \psi(\gamma_{q'}) = \emptyset$. Therefore, either $\pi_{\overline{M}}(\hat \psi(\gamma_{q}))$ and $\pi_{\overline{M}}(\hat \psi(\gamma_{q'}))$ coincide or they have empty intersection, in particular, they cannot be transverse.	

We next study the geodesics $\pi_M(\gamma_q)$. First, again since deck transformations of $\pi_{\Mbar}$ act as the identity on the first component, $\pi_{\Mbar}\circ \psi (\gamma_{q})$ is a proper curve in $\Mbar$, thus, as in Step 6, $\pi_M(\gamma_q)$ is a proper geodesic in $M$. We claim that $\pi_M(\gamma_q)$ is injectively immersed, hence embedded because of its properness. If $\pi_M(s_1,q) = \pi_M(s_2,q)$ for some $s_1 \neq s_2$, then $\hat \psi(s_1,q)$ and $\hat \psi(s_2,q)$ project onto the same point in $\Mbar$. Comparing the first component of the two points in $\RR^{m+1}$, we deduce $s_1 = \pm s_2$, hence $s_1 = -s_2$. Let $\hat T \in \mathrm{deck}(\pi_{\overline{M}})$ be the deck transformation that satisfies $\hat T(\hat \psi(s_1,q)) = \hat \psi(-s_1,q)$, and consider the geodesic $\sigma \subset \RR^{m+1}$ joining $\hat \psi(s_1,q)$ and $\hat \psi(-s_1,q)$. Then, $\hat T(\sigma) = \sigma$, and since $\hat T(\hat \psi(\RR^m)) = \hat \psi(\RR^m)$ the middle point of the segment joining $\hat \psi(s_1,q)$ and $\hat \psi(-s_1,q)$ shall necessarily be a fixed point of $\hat T$. Hence $\hat T = \mathrm{id}$, contradiction.

	Having proved that $\pi_M(\gamma_q)$ is injectively immersed, we claim that for $q \neq q'$ either $\pi_M(\gamma_q) \cap \pi_M(\gamma_{q'}) = \emptyset$ or they coincide. We proceed by contradiction: since both the curves are geodesics, we assume that $\pi_M(\gamma_q)$ and $\pi_M(\gamma_{q'})$ are transverse somewhere. Then, also their images $\psi \circ \pi_M(\gamma_q)= \pi_{\overline{M}} \circ \hat \psi(\gamma_q)$ and $\psi \circ \pi_M(\gamma_{q'})= \pi_{\overline{M}} \circ \hat \psi(\gamma_{q'})$ are transverse somewhere, which contradicts the observations at the beginning of this Step.  

With the above preparation, let $x \in M$ and $(s,q),(s',q') \in \pi_M^{-1}(x)$. From the fact that $\pi_M(\gamma_q)$ and $\pi_M(\gamma_{q'})$ either coincide or they do not intersect, we also deduce that $\pi_M(0,q) = \pi_M(0,q')$. Hence, the map 
	\[
	\pi_\Omega : M \to \Omega, \qquad \pi_\Omega(x) = \pi_M\big(  (0,q) \big) \quad \text{for any chosen } \, (s,q) \in \pi_M^{-1}(x)
	\]
is well defined. Fix a contractible, small open subset $\Omega' \subset \Omega$, and let $\Omega_0'\subset \Omega_0$ be one of its diffeomorphic lifts by $\pi_M$. Proceeding as in the end of Step 6, we can prove that the union $E \subset M$ of lines $\pi_M(\gamma_q)$ passing through points $q \in \Omega'$ is isometric to $\RR \times \Omega_0'$, the isometry being $(s,q) \to \pi_M(\gamma_q(s))$. It follows that $\pi_\Omega$ is a fibration and a Riemannian submersion. \\[0.3cm]
\textbf{Step 8: } $\Omega$ is compact.\\[0.2cm]
\noindent \emph{Proof of Step 8.} A straightforward computation that uses \eqref{eq_perinte} and \eqref{eq_useful_mu} then shows
	\begin{align*}
	\int_{M} |\Phi|^2 e^{c\eta} & = |\Omega| e^{-ch_0t_0} \int_{\RR} \frac{\mu_1(s)^2}{2} e^{k\sigma_1(\tau(s))} \di s \\
	& = |\Omega| \frac{\pi k e^{-ch_0t_0}}{2},
	\end{align*}
	where $|\Omega|$ is the $(m-1)$-dimensional volume of $\Omega$. So in this case $|\Phi| \in L^2(M,e^{c\eta})$ holds true if and only if the manifold $\Omega$ has finite volume. Being $\Omega$ flat, $\Omega$ must be compact.\\[0.3cm] 
\textbf{Step 9: } Each of the solitons $\psi$ in Item (ii) is stable.\\[0.2cm]
\noindent \emph{Proof of Step 9.} It is known that the embedding $\hat \psi : \RR^m \to \RR^{m+1}$ is stable: to show this, denoting with $\hat \nu$ a global choice of the normal vector, it is enough to observe that the function $\hat v = \langle \hat \nu,\hat \partial_t \rangle$ has a sign, say it is positive up to suitably choosing $\hat \nu$, and satisfies $0 = \Delta_{-c\eta} \hat v + |\II|^2 \hat v = L \hat v$ on $M$ by Proposition \ref{B:prp:uequation}. Let $T : \RR^m \to \RR^m$ be a deck transformation of $\pi_M$. For fixed $x \in M$, we compare $\hat v(\tilde x)$ to $\hat v(T(\tilde x))$, where $\tilde x \in \pi_M^{-1}(x)$. Since every deck transformation of $\pi_{\overline{M}}$ acts as the identity in the first factor of $\RR \times \RR^m$, the product of $\hat \nu$ with $\hat \partial_t$ is constant on the fiber $\pi_{\overline{M}}^{-1}(\psi(x))$. Therefore, since $\hat \psi(\tilde x), \hat \psi(T(\tilde x)) \in \pi_{\overline{M}}^{-1}(\psi(x))$, we deduce that $\hat v(\tilde x) = \hat v(T(\tilde x))$, hence $\hat v$ induces a smooth, positive function $v : M \to \RR$ which solves $L v = 0$, proving the stability of $\psi$. 
\end{proof}

We can apply the above result, for instance, to the case $\Mbar^{m+1}=\HH^{m+1}$ the hyperbolic space, to deduce the following half-space theorem:
\begin{corollary} \label{stab:crl:hspaceumbilicity}
	There exist no complete stable mean curvature flow solitons $\psi : M \to \HH^{m+1} = \RR\times_{e^t}\RR^m$ with respect to $e^t \partial_t$ with soliton constant $c<0$ and satisfying
	\begin{equation} \label{stab:hspacebound1}
	|\Phi| \in L^2(M) \quad \text{and} \quad \psi(M) \subseteq \left[ \log\left(-\frac{m}{c}\right), + \infty\right),
	\end{equation}
	with $\Phi$ the umbilicity tensor of $\psi$.
\end{corollary}

\begin{proof}
	By contradiction. Suppose that there exists $\psi : M \to \Mbar$ satisfying the given conditions. Noting that \eqref{stab:PhiL2} is implied by $|\Phi| \in L^2(M)$ and the fact that $\psi$ lies in the given half-space, we apply Theorem \ref{stab:thm:umbilicity} to each connected component of $M$ to conclude that $\psi$ is a totally geodesic complete hyperplane containing $\partial_t$. This contradicts the hypothesis that $\psi(M)$ is contained in a vertical half-space.
\end{proof}

\begin{remark} \label{remhoro}
	As a byproduct of Corollary \ref{stab:crl:hspaceumbilicity}, note that the horosphere $\left\{ t = \log\left(-\frac{m}{c}\right) \right\}$ is a posteriori an unstable soliton.
\end{remark}

\begin{proof}[Proof of Corollary \ref{intro:crl:sphere}] 
	We represent $\sphere^{m+1} \backslash \{p,-p\}$ as the warped product $(0, \pi) \times_{\sin t} \sphere^{m}$, with $(t,x) \to p$ as $t \to 0$ for each $x \in \sphere^m$. Note that the coordinate $t$ on the $(0,\pi)$-factor is everywhere equal to the distance from $p$ on $\sphere^{m+1} \backslash \{p,-p\}$. Suppose by contradiction that $\psi$ is stable. Then, we apply Theorem \ref{stab:thm:umbilicity}: note that the issue of $M$ possibly passing through $p,-p$ is easily handled by observing that the relevant differential inequalities for $|\Phi|^2$ and $H^2$ also hold in $\psi^{-1}\{p,-p\}$, and the completeness of $\overline{M}$ is only used in case (ii) to characterize $M$ as a quotient of the grim-reaper. Therefore, we can infer that $\psi$ is totally geodesic, hence an equator. However, in this case the stability operator becomes
	$$
	L = \Delta_{-c\eta} + m - c \cos(\pi_I), 
	$$
	and it is always unstable being $M$ compact, $m - c \cos(\pi_I) \ge 0$ and positive somewhere. Contradiction.
\end{proof}

The next result estimates the number of large ends of a soliton $M$, in a suitable sense recalled below, under the assumption that $M$ is stable or has finite stability index. We recall that the study of the ends of $M$ in terms of harmonic (or, more generally, $\Delta_f$-harmonic) functions with special properties on $M$ is a beautiful theory applied with remarkable success due to P. Li, L.F. Tam and J.P. Wang, see \cite{Li_book} for a detailed presentation and \cite{PRS} for recent improvements. We recall some terminology: taking an exhaustion $\Omega_j$ of $M$ by relatively compact open sets, $M$ is said to have finitely many ends if the number of ends of $M \backslash \overline{\Omega}_j$ (non-decreasing as a function of $j$) has a finite limit as $j \ra \infty$. The limit is independent of the chosen exhaustion. Given an end $E$ with $\partial E$ smooth, a double of $E$ is a manifold obtained by gluing two copies of $E$ along $\partial E$ the common boundary, with metric being that of $E$ apart from a neighbourhood of $\partial E$; then, $E$ is said to be $\Delta_f$-hyperbolic if $\Delta_f$ admits a positive Green kernel on some (equivalently, any) double of $E$. The notion of $\Delta_f$-hyperbolicity is, roughly speaking, a way to measure the size of $E$.

\begin{proposition} \label{stab:prp:hypends}
	Let $\psi:M^m\to\Mbar^{m+1}=I\times_h\PP$ be a connected, complete mean curvature flow soliton with respect to $X=h(t)\partial_t$ with soliton constant $c$. Assume that $\Mbar$ has constant sectional curvature $\bar\kappa$ and that 
	\begin{equation} \label{stab:largekappabar}
	c h'(\pi_I \circ \psi) \le \frac{2m-1}{2} \bar \kappa \qquad \text{on } \, M \backslash\Omega,
	\end{equation}
	for some relatively compact open set $\Omega$. Then, 	
	setting $\eta=\etabar\circ\psi$ with $\etabar$ as in \eqref{B:defetabar},
	\begin{itemize}
		\item[(i)] if $M$ is stable and \eqref{stab:largekappabar} holds with $\Omega = \emptyset$, then $M$ has only one $\Delta_{-c\eta}$-hyperbolic end;  		
		\item[(ii)] if $M$ has finite index, then $M$ has at most finitely many $\Delta_{-c\eta}$-hyperbolic ends.
	\end{itemize}	
\end{proposition}

\begin{proof}
	Since $\Mbar$ has constant sectional curvature $\bar\kappa$, $\PP$ has constant sectional curvature, say, $\kappa$, with $\bar \kappa$ and $\kappa$ related by Gauss equation \eqref{stab:constcurv}. Consider the Bakry-Emery Ricci tensor
	\[
	\Ric_{-c\eta}=\Ric-c\Hess(\eta).
	\]
	Then by the inequality before (5.13) of \cite{AdeLR} we have
	\begin{equation}\label{eq_lowerricci}
	\Ric_{-c\eta}\geq-(|\II|^2+ch'-(m-1)\bar\kappa)\metric_M \doteq \rho \metric_M.
	\end{equation}
	In this setting the stability operator (\ref{stab:stoperator}) becomes
	\begin{equation}
	L = \Delta_{-c\eta} + (|\II|^2-ch'+m\bar\kappa).
	\end{equation}
	Under assumptions $(i)$ or $(ii)$, without loss of generality we can assume that $0 \leq \lambda_1^L(M \backslash \overline{\Omega})$, and thus there exists a positive smooth solution of $Lu=0$ on $M \backslash \overline{\Omega}$.
	Hence, considering the operator
	\[
	\bar L = \Delta_{-c\eta} - \rho 
	\]
	from \eqref{eq_lowerricci} and \eqref{stab:largekappabar} we have
	\[
	\begin{array}{lcl}
	\bar L u & = & \disp \Delta_{-c\eta} u + (|\II|^2 + ch'-(m-1)\bar\kappa)u = L u + (2ch' -(2m-1) \bar \kappa) u \\[0.2cm]
	& = & (2ch' -(2m-1) \bar \kappa) u \le 0 \qquad \text{on } \, M \backslash \overline{\Omega}.
	\end{array}
	\]
	This means that $\lambda^{\bar L}_1(M\backslash \overline\Omega)\geq 0$, and therefore $\bar L$ is nonnegative (in case (i)) or has finite index (in case (ii)). Applying Corollary 7.12 of \cite{PRS} we deduce that $M$ has at most one (respectively, finitely many) $\Delta_{-c\eta}$-hyperbolic ends.
\end{proof}

\begin{remark}
	Corollary 7.12 of \cite{PRS} is stated in terms of the standard Laplacian $\Delta$, but its extension to 
	the weighted operator $\Delta_f$ is merely notational. Moreover, the result requires the function $\rho$ in \eqref{eq_lowerricci} be non-negative, but indeed this is not necessary (although, the case of non-negative $\rho$ is the most relevant one). In fact, Corollary 7.12 is a direct application of Theorem 5.1 in \cite{PRS}, where no sign assumption on $\rho$ is needed. 
\end{remark}

We conclude with a result that establishes a sufficient condition for every end of $M$ to be $\Delta_{-c\eta}$-hyperbolic:

\begin{proposition} \label{stab:prp:isoperimetric}
	Let $\psi:M^m\to\Mbar^{m+1}=I\times_h\PP$ be a connected, complete mean curvature flow soliton with respect to $X=h(t)\partial_t$ with soliton constant $c$. Assume that $\Mbar$ is a Cartan-Hadamard manifold (i.e. simply connected, with nonpositive sectional curvature), and that, setting $\eta = \bar \eta \circ \psi$ with $\bar \eta$ as in \eqref{B:defetabar},
	\begin{equation}\label{ipo_sob}
	\inf_{M} e^{c\eta} > 0, \qquad |\overline\nabla \bar \eta| \in L^m( M, e^{c\eta}).
	\end{equation}
	Then, every end of $M$ is $\Delta_{-c\eta}$-hyperbolic.
\end{proposition}

\begin{proof}
	The proof directly follows from results in \cite{imperimo_1}, so we will be sketchy. The first in \eqref{ipo_sob} guarantees that
	$$
	\inf_{x \in M} \liminf_{r \ra 0} \frac{\vol_{-c\eta}\big(\partial B_r(x)\big)}{r^m} > 0
	$$
	with $\vol_{-c\eta}$ the $(m-1)$-dimensional measure weighted by $e^{c\eta}$, and $\partial B_r(x)$ the sphere in the metric of $M$. Therefore, $M$ enjoys the Sobolev inequality in \cite[Thm. 6]{imperimo_1} (with the observation that the soliton equation \eqref{intro:soliton} implies the vanishing of term $\mathbf{H}_f$ in \cite{imperimo_1}):
	\begin{equation}\label{eq_sob_conpot}
	\left( \int_M |\phi|^{\frac{m}{m-1}}e^{c\eta} \right)^{\frac{m-1}{m}} \le S_1 \int_M \Big[|\nabla \phi|  + \phi |\overline \nabla \bar \eta|\Big] e^{c\eta} \qquad \text{holds } \, \forall \, \phi \in \Lip_c(M),
	\end{equation}
	for some constant $S_1$. By the second in \eqref{ipo_sob}, we can choose a compact set $K$ such that 
	$$
	\left\{ \int_{M \backslash K} |\overline \nabla \bar \eta|^m e^{c\eta} \right\}^{\frac{1}{m}} < S_1^{-1}. 
	$$	
	Restricting \eqref{eq_sob_conpot} to $\phi \in \Lip_c(M \backslash K)$ and applying Holder's inequality to the right-hand side to absorb the potential term into the left-hand side, we deduce 
	\begin{equation}\label{eq_sob_conpot2}
	\left( \int_M |\phi|^{\frac{m}{m-1}}e^{c\eta} \right)^{\frac{m-1}{m}} \le S_2 \int_M |\nabla \phi| e^{c\eta} \qquad \text{holds } \, \forall \, \phi \in \Lip_c(M\backslash K),
	\end{equation}
	for some $S_2>0$. The validity of an isoperimetric inequality on $M \backslash K$ force all ends of $M$ with respect to $K$ to be $\Delta_{-c\eta}$-hyperbolic, as observed by H.-D. Cao, Y. Shen and S. Zhu \cite{CSZ} and by P. Li and J. Wang \cite{LW} (cf. \cite[Cor. 7.17]{PRS}, and Remark 6 in \cite{imperimo_1}).
\end{proof}	

\begin{remark}
	A note of warning: in the recent \cite[Lem. 4.1]{IR}, a different kind of Sobolev inequality has been claimed for translators in Euclidean space as a consequence of the fact that translators in $\RR^{m+1}$ generate minimal hypersurfaces in $N = \RR^{m+1} \times \sphere^1$ endowed with a suitable warped product metric. However. it seems to us that the proof has a flaw, namely, to be able to apply the Hoffman-Spruck Sobolev inequality \cite{HS} the authors would need a uniform control on the injectivity radius, since $N$ is not a Cartan-Hadamard manifold, and the latter seems difficult to achieve without further assumptions.
\end{remark}

\section{MCF graphs}

In this section we consider the case where $\psi:M\to\Mbar=I\times_h\PP^m$ is the graph of a function $v:\PP\to I\subseteq\RR$, that is,
\begin{equation} \label{gr:psi}
	\psi(x)= (v(x),x)
\end{equation}
for $x\in\PP$ and, of course, $M$ is $\PP$ endowed with the graph metric $\psi^*\metric_{\Mbar}$. Indicate with $D$ the covariant derivative in $(\PP, \metric_{\PP})$. It is convenient to express the graph in terms of the flow parameter $s$ of the conformal field $X = h(t) \partial_t$:
\begin{equation}\label{def_st}
	s(t)= \int_{t_0}^t \frac{\di \sigma}{h(\sigma)}.
\end{equation}
for a fixed $t_0 \in I$, and thus to define
\begin{equation} \label{gr:defw}
	u(x)= s(v(x)), \qquad \lambda(s) = h(t(s)).
\end{equation}
Hereafter, we write $\psi_u$ to specify that the immersion $\psi$ is a graph of a function $u$ as above.

Denoting with $\nu$ the upward normal 
$$
	\nu = \frac{1}{\lambda(u)\sqrt{1+|Du|^2}} \left( \partial_s - (\Phi_u)_* Du\right),
$$
where $\Phi_u$ is the flow of $X$ at time $u(x)$, a computation shows that the (normalized) scalar mean curvature of $\psi_u$ in the direction of $\nu$ satisfies
$$
	m \lambda(u)H = \div_\PP \left( \frac{Du}{\sqrt{1+|Du|^2}}\right) - m \frac{\lambda_s(u)}{\lambda(u)} \frac{1}{\sqrt{1+|Du|^2}}.
$$
with $\lambda_s = d\lambda/ds$. If $\psi_u$ is a soliton with respect to $\partial_s = h(t) \partial_t$ with constant $c$, then
$$
	mH = c \langle \partial_s, \nu \rangle = \frac{c|\partial_s|^2}{\lambda(u)\sqrt{1+|Du|^2}} = \frac{c \lambda(u)}{\sqrt{1+|Du|^2}}.
$$
Thus, a soliton for $\partial_s$ with constant $c$ shall satisfy  
\begin{equation}\label{eq_soliton_graph}
	\div_\PP \left( \frac{Du}{\sqrt{1+|Du|^2}}\right) = \frac{c  \lambda^3(u) + m\lambda_s(u)}{\lambda(u)} \frac{1}{\sqrt{1+|Du|^2}} \doteq \frac{f(u)}{\sqrt{1+|Du|^2}}
\end{equation}
with
\begin{equation}\label{def_fs}
	f(s) = \frac{c  \lambda^3(s) + m\lambda_s(s)}{\lambda(s)} = (c h^2 + m h')(t(s)) = \zeta_c\big( t(s)\big).
\end{equation}

\begin{example}\label{ex_schwarz_constant}
	Referring to Example \ref{ex_schwarz}, we search for slices that are solitons in Schwarzschild and ADS-Schwarzschild spaces. Note that in each of \eqref{def_Vr_schwarz} and \eqref{def_Vr_ADS} the potential $V(r)$ is strictly increasing on $(r_0(\mathfrak{m}), \infty)$, therefore $h$ satisfies
	$$
	h'(t) = \frac{\di r}{\di t} = \sqrt{V(r(t))} >0, \qquad h''(t) = \frac 12 \frac{\di V}{\di r}(r(t)) > 0.
	$$
	Thus, for $c \ge 0$, the soliton function $\zeta_c(t)$ is positive and strictly increasing on $\RR^+$, hence there are no constant solutions of \eqref{eq_soliton_graph}. On the other hand, if $c<0$ slices $\{t=t_1\}$ that are solitons with respect to $h(t) \partial_t = r \sqrt{V(r)} \partial_r$ correspond to $t_1 = t(r_1)$ with $r_1> r_0(\mathfrak{m})$ solving
	\begin{equation} \label{potenziale}
	V(r) = \frac{c^2}{m^2} r^4.
	\end{equation}
	For the Schwarzschild space \eqref{def_Vr_schwarz}, such a solution do exist if and only if
	\begin{equation}\label{eq_sch}
	r_* \doteq \left( \frac{\mathfrak{m}(m-1)m^2}{2c^2} \right)^{\frac{1}{m+3}} \ge \left( \frac{\mathfrak{m}(m+3)}{2}\right)^{\frac{1}{m-1}}.
	\end{equation}
	In particular there are two solutions $r_0(\mathfrak{m}) < r_{1,-} < r_* < r_{1,+}$ if the strict inequality holds in \eqref{eq_sch}, and a unique solution $r_1 = r_*$ if equality holds.\\
	In the case of the ADS-Schwarzschild space define 
	$$
	\triangle \doteq  (m+1)^2 + 4 \bar \kappa(m-1)\frac{c^2}{m^2}(m+3).
	$$
	If $\bar\kappa \geq 0$ then $\triangle > 0$ and a solution $r_1$ of \eqref{potenziale} exists if and only if
	$$
	r_*^2 \doteq \frac{m^2}{c^2} \cdot \frac{m+1 + \sqrt{ \triangle }}{2(m+3)}
	$$
	satisfies
	$$
	\frac{\mathfrak{m}(m-1)}{r_*^{m+1}} \le \frac{2c^2}{m^2}r_*^2-1,
	$$
	and, again, there are two solutions $r_0(\mathfrak{m}) < r_{1,-} < r_* < r_{1,+}$ if the strict inequality holds, and a unique solution $r_1 = r_*$ if equality holds. Eventually, for the ADS-Schwarzschild space with $\bar\kappa < 0$, $r_1$ exists if and only if $\triangle \geq 0$ and
	$$
	r_{*,\pm}^2 \doteq \frac{m^2}{c^2} \cdot \frac{m+1 \pm \sqrt{ \triangle }}{2(m+3)}
	$$
	satisfy
	\begin{equation} \label{rastpm}
	\frac{\mathfrak{m}(m-1)}{r_{*,-}^{m+1}} \ge \frac{2c^2}{m^2}r_{*,-}^2-1, \qquad \frac{\mathfrak{m}(m-1)}{r_{*,+}^{m+1}} \le \frac{2c^2}{m^2}r_{*,+}^2-1.
	\end{equation}
	There are, indeed, two solutions $r_{1,\pm}$ satisfying \eqref{potenziale} with $r_0(\mathfrak{m}) < r_{1,-} < r_{*,+} < r_{1,+}$ if both of \eqref{rastpm} hold with the strict inequality and a unique solution $r_1 = r_{*,+}$ if the equality holds. Observe that, both in the Schwarzschild and in the ADS-Schwarzschild case, for $|c|$ positive and small enough two soliton slices do exist.
\end{example}

Equations of the type \eqref{eq_soliton_graph} have recently been investigated in \cite{BMPR}, where the authors established the following sharp weak maximum principles and  Liouville type theorems under mild growth assumptions on the volume of geodesic balls (cf. Thms. 5.4 and 8.4 therein). We report the statement to help the reading.

\begin{theorem}\label{gr:thm:BMPR}
	Let $(M, \metric)$ be complete, fix an origin $o \in M$ and let $r(x) = \dist(x,o)$. Let $u$ solve
	\[
	\div_{\PP}\left(\frac{Du}{\sqrt{1+|Du|^2}}\right) \ge \frac{f(u)}{\sqrt{1+|Du|^2}} \qquad \text{on }\, M,
	\]
	for some $f \in C(\RR)$. 
	\begin{itemize}
		\item[(i)] \cite[Thm. 5.4]{BMPR} Suppose that
		$$
		f(t) \ge C >0 \qquad \text{for } \, t >>1, 
		$$
		for some constant $C>0$, that
		$$
		\lim_{r(x) \ra +\infty} \frac{u_+(x)}{r(x)^\sigma} = 0
		$$
		for some $\sigma \in [0, 2]$, and that
		$$
		\begin{array}{ll}
		\disp \liminf_{r \ra +\infty} \frac{\log\vol(B_r)}{r^{2-\sigma}} < \infty & \quad \text{if } \, \sigma < 2, \quad \text{or} \\[0.5cm]
		\disp \liminf_{r \ra +\infty} \frac{\log\vol(B_r)}{\log r } < \infty & \quad \text{if } \, \sigma = 2. \\[0.5cm]
		\end{array}
		$$
		Then, $u$ is bounded from above and $f(\sup_M u) \le 0$.
		\item[(ii)] \cite[Thm. 8.4]{BMPR} Suppose that
		$$
		f(t) \ge Ct^\omega >0 \qquad \text{for } \, t >>1, 
		$$
		for some constants $C>0$ and $\omega>1$, and that
		$$
		\liminf_{r \ra +\infty} \frac{\log\vol(B_r)}{r^2} < \infty.
		$$
		Then, $u$ is bounded from above and $f(\sup_M u) \le 0$.\end{itemize} 
\end{theorem}

We apply the above result for the Schwarzschild spaces in Example \ref{ex_schwarz} to deduce Theorem \ref{intro:sch} in the Introduction.

\begin{theorem}
	There exists no entire graph in the Schwarzschild and ADS-Schwarz\-schild space (with spherical, flat or hyperbolic topology) over a complete $\PP$ that is a soliton with respect to the field $r \sqrt{V(r)} \partial_r$ with constant $c \ge 0$. 
\end{theorem}

\begin{proof}
	We refer to Examples \ref{ex_schwarz} and \ref{ex_schwarz_constant}. Let $\psi(x) = (v(x),x)$ be a soliton graph, and let $u(x) = s(v(x))$ be, as above, a description of the graph with respect to the flow parameter $s(t)$. We observed in Example \ref{ex_schwarz_constant} that the soliton function $\zeta_c(t)$ is positive and increasing on $\RR^+$, hence so is $f$ in \eqref{eq_soliton_graph}. If $\PP$ is compact, a contradiction directly follows by integrating \eqref{eq_soliton_graph} on $\PP$ and applying the divergence theorem. By Myers theorem, this case includes both the Schwarzschild and the ADS-Schwarzschild space with spherical topology. In the remaining cases, observe that $V(r) \sim r$ for large $r$, thus in view of \eqref{def_tr_sch}
	$$
	h(t) = r(t) \sim e^t \qquad \text{as } \, t \ra \infty.
	$$
	As a consequence, the flow parameter $s(t)$ in \eqref{def_st} is bounded from above and so is $u$. Furthermore, in the ADS-Schwarzschild space with flat or hyperbolic topology, $\PP$ is Einstein with non-positive Einstein constant, and by the Bishop-Gromov comparison theorem
	$$
	\limsup_{r \ra \infty} \frac{\log \vol B_r}{r} < \infty.
	$$
	Applying Theorem \ref{gr:thm:BMPR} we deduce $f(\sup_M u) = \zeta_c(s(\sup_M u)) \le 0$, contradiction.
\end{proof}

In a similar way, we now give the

\begin{proof}[Proof of Corollary \ref{intro:crl:hspaceslice}]
	If $\psi : M^m \to \RR \times_{e^t} \RR^m$ is a soliton graph with respect to $e^t\partial_t$ with constant $c \in \RR$, then the soliton function of $\psi$ is $\zeta_c(t) = e^t(m+ce^t)$. Up to translation, the flow parameter $s$ in \eqref{def_st} is $s = -e^{-t}$, so $u(x) = s(v(x)) <0$  on $\PP$. Setting $f(s) = \zeta_c(t(s))$ we note that $f(s) \ra m+c$ as $s \ra 0$, and applying Theorem \ref{gr:thm:BMPR} we deduce $f(\sup_M u) \le 0$, a contradiction since $\zeta_c>0$. If $c<0$, then $\zeta_c(t) = 0$ if and only if $t = t_0 \doteq \log\left(-\frac{m}{c}\right)$. We assume that $\psi(M)$ is contained into one of the two half-spaces determined by $\{t=t_0\}$, otherwise the conclusion is immediate. Applying Theorem \ref{gr:thm:BMPR} both to $u$ (if $u < s(t_0)$) or to $-u$ (if $u > s(t_0)$) we respectively obtain $\sup_M u = t_0$ and $\inf_M u = t_0$. The conclusion follows. 
\end{proof}

Turning our attention to graphs in $\HH^{m+1}$ along hyperspheres, we similary have the next result that extends \cite{docarmolawson}, where the minimal case is considered.

\begin{proposition}\label{prp_hyperspheres}
	Let $\psi_u : M \ra \HH^{m+1} = \RR \times_{\cosh t} \HH^{m}$ be an entire soliton graph with respect to $\cosh t \partial_t$ and with constant $c$.
	\begin{itemize}
		\item[$(i)$] If $c=0$, then $\psi_u$ is the totally geodesic hypersphere $\{t=0\}$;
		\item[$(ii)$] if $|c| > m/2$, there is no such $\psi_u$.
	\end{itemize}
\end{proposition}

\begin{proof}
	The soliton function is $\zeta_c(t) = m \sinh t + c \cosh^2t = c \sinh^2 t + m \sinh t + c$. With respect to the flow variable $s(t)$, $u$ is a bounded graph and thus we can apply Theorem \ref{gr:thm:BMPR} to deduce
	$$
	\zeta_c\big( t( \sup_M u) \big) \le 0 \le \zeta_c \big( t( \inf_M u) \big).
	$$
	In assumption $(i)$, $\zeta_c$ is strictly increasing and thus $u$ must be constant and correspond to the unique zero of $\zeta_c$. On the other hand, in $(ii)$  $\zeta_c$ is strictly positive or negative on $\RR$, leading to a contradiction. The conclusion follows.
\end{proof}

Next, we consider the case when the soliton function $\zeta_c(t)$ is negative along the graph. As expected, one needs much stronger conditions to deduce a Liouville type theorem and a consequent rigidity of the graph: again, parabolicity comes into play.

\begin{theorem} \label{teoparab}
	Let $\psi_u : M^m \to \RR \times_h \PP^m$ be a graphical mean curvature flow soliton with respect to the vector field $h(t)\partial_t$ with soliton constant $c$. Assume that $(\PP,\metric_{\PP})$ is complete and
	\begin{equation} \label{gr:parab}
	\frac{1}{\vol_{\PP}(\partial B_r)} \not\in L^1(+\infty)
	\end{equation}
	where $\partial B_r$ is the boundary of the geodesic ball in $(\PP,\metric_{\PP})$ of radius $r$ centered at a fixed origin $o\in\PP$. Suppose that $\psi(\PP)\subseteq[a,+\infty)\times\PP^m$ for some $a\in\RR$ and that $\zeta_c\leq 0$ in $[a,+\infty)$. Then $\psi(\PP)$ is a slice of the natural foliation of $\RR\times_h\PP^m$.
\end{theorem}

\begin{proof}
	We let $g=\log\sqrt{1+|Du|^2}\geq0$ where $u$ is defined in \eqref{gr:defw} and $D$ is the covariant derivative in $\PP$. Then
	\[
	\vol_{\PP,g}(\partial B_r) = \int_{\partial B_r}e^{-g} \leq \vol(\partial B_r)
	\]
	and assumption (\ref{gr:parab}) implies
	\begin{equation} \label{gr:fparab}
	\frac{1}{\vol_{\PP,g}(\partial B_r)} \not\in L^1(+\infty).
	\end{equation}
	Completeness of $(\PP,\metric_{\PP})$ and (\ref{gr:fparab}) imply, by Theorem 4.14 of \cite{AMR} that $\PP$ is parabolic with respect to the operator
	\begin{equation} \label{gr:fLap}
	e^g\div_{\PP}(e^{-g}D\;\cdot\;).
	\end{equation}
	On the other hand, from \eqref{eq_soliton_graph} we have the validity of
	\[
	e^g\div_{\PP}(e^{-g}Du) = f(u) = \zeta_c\big(s(u)\big) \leq 0
	\]
	since $\psi(\PP)\subseteq[a,+\infty)\times\PP$ and $\zeta_c\leq0$ on $[a,+\infty)$. Now $\pi_I \circ \psi$ is bounded below and therefore $u$ is bounded below, superharmonic with respect to the operator (\ref{gr:fLap}). It follows that $u$ is constant, thus $\psi$ is a soliton slice.
\end{proof}

As a direct corollary we have the following result, to be compared with Corollary \ref{stab:crl:hspaceumbilicity} and the subsequent Remark \ref{remhoro}.

\begin{corollary}
	The only entire graph $\psi_u : M \to \HH^3 = \RR \times_{e^t} \RR^2$ in the $3$-dimensional hyperbolic space that is a graphical soliton with respect to $e^t\partial_t$ with soliton constant $c < 0$ and contained in the half-space $\left[ \log\left(-\frac{2}{c}\right), + \infty\right)$ is the horosphere $\{t = \log\left(-\frac{2}{c}\right) \}$.
\end{corollary}

\begin{proof}
	It is sufficient to observe that the slice $\RR^2$ satisfies \eqref{gr:parab} and apply Theorem \ref{teoparab}.
\end{proof}

Translating solitons with controlled growth in products $\RR\times\PP$ have been studied in \cite{BMPR}, as well as self-expanders in Euclidean space. We state without proof the following theorems, that well fit with our results.

\begin{theorem}[\cite{BMPR}, Thm. 1.23]
	Let $(\PP, \metric_\PP)$ be complete manifold, set $r(x)=\dist_{\PP}(x,o)$ for some fixed origin $o \in \PP$ and suppose that for some $\sigma \in [0,2]$ either
	$$
	\begin{array}{ll}
	\disp\liminf_{r \ra \infty} \dfrac{\log\vol(B_r)}{r^{2-\sigma}} < \infty & \qquad \text{if } \, \sigma \in [0,2) \qquad \text{or} \\[0.5cm]
	\disp\liminf_{r \ra \infty} \dfrac{\log\vol(B_r)}{\log r} < \infty & \qquad \text{if } \, \sigma = 2.
	\end{array}
	$$
	Then, there exist no entire graph $\psi_u : M \to \RR \times \PP$ that is a translator with respect to the vertical direction $\partial_t$ and satisfies
	$$
	|u(x)| = o\big( r(x)^\sigma \big) \qquad \text{as } \, r(x) \ra \infty.
	$$
\end{theorem}

\begin{theorem}[\cite{BMPR}, Thm. 5.20]
	Let $\psi_u : M \to \RR \times \RR^m$ be a translator in $\RR^{m+1}$ with respect to a parallel vector $Y$. Assume that 
	$$
	\limsup_{r(x) \ra \infty} \frac{|u(x)|}{r(x)} = \hat u < \infty.
	$$
	Then, the angle $\vartheta$ between $Y$ and the horizontal hyperplane $\RR^m$ must satisfy
	$$
	\tan \vartheta \leq \hat u.
	$$
	In particular, if $\hat u = 0$, $\Sigma$ cannot be a self-translator with respect to a vector $Y$ which is not tangent to the horizontal $\RR^m$.
\end{theorem}

\begin{theorem}[\cite{BMPR}, Thm. 5.23]
	The only entire bounded graph in $\RR^{m+1}$ that is a self-expander for the mean curvature flow is a hyperplane passing through the origin.
\end{theorem}

We conclude this section with the next observation.

\begin{proposition} \label{gr:prp:unboundgrad}
	Let $\psi_u : M \to \RR \times \PP$ be a translating mean curvature flow soliton graph with soliton constant $c>0$. Let $(\PP,\metric_{\PP})$ be complete and with subexponential volume growth. Then
	\begin{equation} \label{gr:unboundgrad}
	\sup_{\PP}|Du| = +\infty.
	\end{equation}
\end{proposition}

\begin{remark}
	Observe that condition (\ref{gr:unboundgrad}) is obviously equivalent to
	\begin{equation}
	\inf_{\PP} H = \inf_{\PP} \frac{c}{m\sqrt{1+|Du|^2}} = 0.
	\end{equation}
\end{remark}

\begin{proof}[Proof of Proposition \ref{gr:prp:unboundgrad}]
	Suppose by contradiction that (\ref{gr:unboundgrad}) is false. Then, there exists $\Lambda>0$ such that $|Du|<\Lambda$ on $\PP$. Let $B_r=B_r(o)$ be the geodesic ball of radius $r$ centered at a fixed origin $o\in\PP$. From equation \eqref{eq_soliton_graph} satisfied by $u$ using the divergence theorem we deduce
	\[
	\Lambda \, \frac{\vol(\partial B_r)}{\sqrt{1+\Lambda^2}} \geq \int_{\partial B_r}\left\langle\frac{Du}{\sqrt{1+|Du|^2}},\nu\right\rangle = \int_{ B_r}\frac{c}{\sqrt{1+|Du|^2}} \geq c \, \frac{\vol(B_r)}{\sqrt{1+\Lambda^2}}
	\]
	and therefore, since $c>0$,
	\begin{equation} \label{gr:logvol}
	\frac{\vol(\partial B_r)}{\vol(B_r)} \geq \frac{c}{\Lambda} > 0.
	\end{equation}
	Integrating the left hand side of (\ref{gr:logvol}) we obtain that the volume growth of geodesic balls centered at $o$ is at least exponential, contradiction.
\end{proof}

\end{document}